\def\@citecolor{blue}
\def\@urlcolor{blue}
\def\@linkcolor{blue}
\def\theequation{\thesection.\@arabic \c@equation}
\def\@citecolor{blue}
\def\@urlcolor{blue}
\def\@linkcolor{blue}
\def\theenumi{\@roman\c@enumi}
\theoremstyle{plain}
\newtheorem{theorem}[equation]{Theorem}
\newtheorem*{definition*}{Definition}
\newtheorem{lemma}[equation]{Lemma}
\newtheorem{corollary}[equation]{Corollary}
\newtheorem{proposition}[equation]{Proposition}
\newtheorem*{remark*}{Remark}
\newtheorem{question}[equation]{Question}
\newtheorem{conj}[equation]{Conjecture}
\theoremstyle{definition}
\newtheorem{remark}[equation]{Remark}
\newtheorem{remarks}[equation]{Remarks}
\newtheorem{example}[equation]{Example}
\newtheorem{definition}[equation]{Definition}
\def\NZQ{\mathbb}               
\def\NN{{\NZQ N}}
\def\AA{{\NZQ A}}
\def\ano=\mathcal{p}
\def\opn#1#2{\def#1{\operatorname{#2}}} 
\opn\supp{supp}
\opn\chara{char}
\opn\length{\ell}
\opn\projdim{proj\,dim}
\opn\depth{depth}
\opn\reg{reg}
\opn\lreg{lreg}
\opn\sat{^{sat}}
\opn\lex{^{lex}}
\opn\thh{\rm th}
\let\goth=\mathfrak
\opn\SK{Sk}
\opn\Char{char}
\opn\Ker{Ker}
\opn\Coker{Coker}
\opn\Im{Im}
\opn\Hom{Hom}
\opn\Tor{Tor}
\opn\Ext{Ext}
\opn\End{End}
\opn\Aut{Aut}
\opn\id{id}
\opn\GL{GL}
\opn\pf{\rm pf\,}
\opn\codim{codim}
\let\ov\overline
\opn\pf{pf}
\opn\sgn{sgn}
\opn\Gin{Gin}
\opn\gin{gin}
\opn\Hilb{Hilb}
\opn\HilbPol{HilbPol}
\opn\ini{in}
\opn\End{end}
\begin{document}

\title{On Jet schemes of pfaffian ideals}
\author{Emanuela De Negri}
\address{Emanuela De Negri - Dipartimento di Matematica - Universit\`a di Genova - Via Dodecaneso 35 - 16146 Genova - Italy - OrcidID 0000-0001-5556-440X - Corresponding author}
\email{denegri@dima.unige.it}
\author{Enrico Sbarra}
\address{Enrico Sbarra - Dipartimento di Matematica - Universit\`a  di Pisa - Largo Bruno Pontecorvo 5 -  56127 Pisa - Italy - OrcidID 0000-0001-9656-6153 }
\email{enrico.sbarra@unipi.it}
\thanks{{\it Acknowledgments}: Our computations have been performed on a dedicated server, which has been acquired thanks to the support of the University of Pisa, within the call "Bando per il cofinanziamento dell'acquisto di medio/grandi attrezzature scientifiche 2016".
The authors were partially supported by INdAM-GNSAGA}
\subjclass[2010]{Primary: 13C40, 14M12,  Secondary: 13D40, 13P10}
\begin{abstract}
Jet schemes and arc spaces received quite a lot of attention by researchers after their introduction, due to J.
Nash, and established their importance as an object of study in M. Kontsevich's motivic integration theory.
Several  results point out that jet schemes carry a rich amount of geometrical information about the original object they stem from, whereas, from an algebraic point of view, little is know about them. In this paper we study some algebraic properties of jet schemes ideals of pfaffian varieties and we determine under which conditions the corresponding jet scheme varieties are irreducible.  
\end{abstract}

\keywords{Jet schemes, pfaffian varieties, determinantal ideals, irreducible varieties}

\maketitle

\section*{Introduction}
The theory of jet schemes received a great impulse from the famous question known as the {\em Nash problem}, which was open for several decades, and was suggesting that certain geometrical properties of a scheme $X$ are somehow reflected in the geometry of its jet schemes $X_k$ and arc space $X_\infty$. A suggestive example of this idea is provided by the beautiful result proved by M. Musta\c{t}\u{a} \cite{Mu}:  if $X$ is locally a complete intersection variety over an algebraically closed field of characteristic 0, then $X_k$ is irreducible for all $k\in\NN_+$ if and only if $X$ has rational singularities. The interested reader will be able to find a rich bibliography (cf. \cite{Do} and \cite{Le} for a good introduction and set of references) which also explores the connections of this theory with that of $p$-adic and motivic integration, and with classical invariant theory - the presentation of which goes way beyond the scope of this note. 

\noindent
From an algebraic point of view, one can associate with any ideal $I$ in a polynomial ring a family of jet schemes ideals
$\mathcal{J}=\{I^{(k)}\}_{k\in\NN}$, and study, for instance, the numerical invariants of the elements of $\mathcal{J}$ and how they relate to those of the original ideal $I$. At present, such ideals are still quite mysterious objects, with some exceptions: the very special  case when $I$ is a monomial ideal is studied  in \cite{GoSm}, where sets of generators of the radical of $I^{(k)}$, for all $k\in\NN$, are determined; jet schemes ideals of the principal ideal generated by the squarefree product of all of the variables are described in \cite{Po}, from which it descends an improvement of the effective differential Nullstellensatz in Differential Algebra. B. A. Sethuraman and his collaborators dedicated quite a few articles to the topic: in \cite{NeSe}, \cite{SeSi} jet schemes of certain determinantal varieties appear as direct summands of the variety of commuting pairs in the centralizer of a 2-regular matrix; in \cite{KoSe} and, subsequently in \cite{Jo} and \cite{GhJoSe}, other properties of jet schemes of determinantal varieties are investigated, also from a computational and combinatorial point of view. 

The reader should keep in mind that, if the ambient space of the original ideal $I$ is $N$ dimensional, that of the $k$th jet scheme ideal $I^{(k)}$ of $I$ is $kN$ dimensional. Together with the fact that brute force Gr\"obner bases computations are very expensive, this fact explains why the calculations of determinantal examples are very limited \footnote{For instance, the computations we performed on the quotient ring of $I^{6,3}_2$ with \cite{Macaulay2} break up after a few minutes, whereas with \cite{CoCoA}  it exhausts the available 250Gb RAM memory in a few hours time.}.

In this note, following the approach of  \cite{KoSe}, we focus on jet schemes ideals $I^{n,k}_r$ of pfaffian ideals - i.e. ideals  generated by all of the pfaffians of size $2r$ of a skew-symmetric $n\times n$ matrix - which is a topic that is  is not treated in the literature yet. Observe that, determinantal varieties seldom verify the hypothesis of  Musta\c{t}\u{a}'s Theorem, e.g. pfaffian varieties are complete intersections only in the trivial cases of 2 pfaffians i.e. variables, and if $n$ is even and the pfaffian ideal $I$ is principal.
Specifically, in the first section we recall some basic facts about classic pfaffian ideals; in Section 2 we prove some results which yield a reduction process from the study of $I^{n,k}_r$ to that of some $I^{n_1,k_1}_{r_1}$, where, $n_1\leq n$,\, $k_1\leq k$,\, $r_1\leq r$. In the remaining sections we prove the main results of this paper, cf. Theorems \ref{main1}, \ref{main2}, \ref{reducibell} and Corollary \ref{perpiudi2}; in particular we establish in which cases the corresponding jet scheme varieties are irreducible.  

\section{Classical pfaffian Varieties}
Let $K$ be an algebraically closed field and let $X$  be an  $n\times n$ skew symmetric matrix of indeterminates over $K$, i.e. the entries $x_{ij}$ of $X$\, with $i<j$\, are indeterminates,\, $x_{ij}=-x_{ji}$\, for\, $i>j$,\, and\, $x_{ii}=0$ for $i=1,\ldots,n$.
 Let $K[X]=K[x_{ij}:1\le i<j\le n]$ be 
 the polynomial ring associated with the matrix $X$. Since a submatrix determined by rows and columns indexes $a_1,\ldots,a_{2r}$ of a given skew-symmetric matrix is  again skew-symmetric, one lets  $[a_1,\ldots,a_{2r}]$ denote the pfaffian of the submatrix $(x_{a_{i}a_{j}})$ of $X$, with\, $i,j=1,\ldots,2r\leq n$, which is called a {\em $2r$-pfaffian} or {\em a pfaffian of size $2r$}.
 Let  $I_r=I_{r}(X)$ be the ideal of  $K[X]$ generated by all the $2r$-pfaffians of $X$, which is the {\em classical}  pfaffian ideal and $\mathcal{P}_{r}$  the corresponding pfaffian variety in $ \mathbb{A}^{n(n-1)/2}$.  Its coordinate  ring $R_{r}(X)=K[X]/I_{r}(X)$ has been studied by many authors, cf. for instance \cite{AD}, \cite{Av}, \cite{De}, \cite{GhKr}, \cite{JoPr}, \cite{KlLa}, \cite{Ma}; we collect in the following proposition some of its most important properties. 

\begin{proposition}
\label{classpfaff} The ring $R_{r}(X)$ is a Gorenstein normal factorial domain of dimension
$(r-1)(2n-2r+1)$. The $a$-invariant of $R_{r}(X)$ is $(r-1)n$ and its multiplicity  is 
{\footnotesize $$e(R_{r}(X))=\det\left[{2n-4r+2 \choose  n-2r-i+j+1}- {2n-4r+2 \choose  n-2r-i-j+1}\right]_{i,j=1,\ldots,r-1}.$$} 
Its Hilbert series is given by the formula
{\footnotesize $$H(R_r, z)=\frac{z^{-{r-1\choose 2}}\det(a_{ij}(z))}
{(1-z)^d},$$}
where $d=\dim(R_{r}(X))$, and, for $1\le i,j\le r-1$,\,  $a_{ij}(z)$ is equal to
{\footnotesize $$\sum_{k\ge 0}\left[{ n-2r+j \choose k}{n-2r+i \choose k}-
  {n-2r+j+i-1 \choose k-1}{n-2r+1\choose k+1}\right]z^k.$$}
\end{proposition}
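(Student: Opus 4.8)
The plan is to read the statement as a compendium of structural facts about the coordinate ring $R_r(X)$ and to organize the proof around the theory of algebras with straightening laws (ASL), which delivers the ring-theoretic assertions and the combinatorial bookkeeping for the Hilbert series within a single framework; accordingly I would attribute the foundational input to \cite{De}, \cite{KlLa}, \cite{Av}, \cite{GhKr} rather than reprove it from scratch. First I would recall that $R_r(X)$ is an ASL on the poset $\Pi$ of all pfaffians $[a_1,\dots,a_{2s}]$ of size $2s\le 2r-2$, ordered by the usual componentwise comparison of index sets, the straightening relations being the classical pfaffian Pl\"ucker-type identities. With respect to a suitable term order the initial ideal of $I_r(X)$ is then the Stanley--Reisner ideal of the order complex of $\Pi$; this flat degeneration is the backbone for everything that follows.

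From the ASL description the ring-theoretic claims follow in a standard chain. That $R_r(X)$ is a domain (equivalently, that $\mathcal{P}_r$ is irreducible) is built into the ASL axioms, since the straightening relations generate a prime ideal. The grade of $I_r(X)$ equals $\binom{n-2r+2}{2}$ by Kleppe--Laksov, so subtracting from $\dim K[X]=\binom{n}{2}$ and expanding gives $\dim R_r(X)=(r-1)(2n-2r+1)$. Cohen--Macaulayness comes from shellability of the order complex of $\Pi$ (equivalently, the initial ideal is Cohen--Macaulay and degenerates flatly), and the Gorenstein property is read off from the top-symmetry of $\Pi$. Normality then follows from Serre's criterion: $S_2$ holds by Cohen--Macaulayness, and $R_1$ holds because the singular locus sits inside the smaller pfaffian variety $\mathcal{P}_{r-1}$, whose codimension in $\mathcal{P}_r$ is $2n-4r+5\ge 2$. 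Finally factoriality reduces to the vanishing of the divisor class group, which Avramov computed to be trivial for the generic skew-symmetric matrix, cf. \cite{Av}.

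The computational heart is the Hilbert series, which I would obtain by enumerating the standard monomials of the ASL by degree. A standard monomial is a chain of pfaffians in $\Pi$, and $[a_1,\dots,a_{2s}]$ contributes degree $s$; counting such chains by total degree is a lattice-path problem, and the Lindstr\"om--Gessel--Viennot lemma converts the enumeration of families of nonintersecting paths into the determinant $\det(a_{ij}(z))$, each $a_{ij}(z)$ being the generating function of a single path between two endpoints read off from $i,j,n,r$. The prefactor $z^{-\binom{r-1}{2}}$ is the normalization coming from the minimal degrees of the $r-1$ paths, and the denominator $(1-z)^d$ records a homogeneous system of parameters of length $d$. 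Verifying that the single-path generating functions are exactly the displayed binomial sums, and that the endpoint data produce precisely these indices, is where the real work lies; this is the step I expect to be the main obstacle, and the point at which I would lean most heavily on the enumerative techniques of \cite{De} and \cite{GhKr}.

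The last two invariants are corollaries of the Hilbert series. Writing $H(R_r,z)=h(z)/(1-z)^d$ with $h(z)=z^{-\binom{r-1}{2}}\det(a_{ij}(z))$ a genuine polynomial, one shows the canonical module is $\omega_{R_r(X)}\cong R_r(X)(-(r-1)n)$, whence the $a$-invariant $(r-1)n$; this is cross-checked against the functional equation relating $H(R_r,z)$ and $H(R_r,z^{-1})$ forced by Gorenstein duality, together with a degree count of the determinant. The multiplicity is $e(R_r(X))=h(1)$, and setting $z=1$ collapses each infinite sum $a_{ij}(1)$ via Vandermonde's convolution $\sum_k\binom{m}{k}\binom{p}{k}=\binom{m+p}{m}$; after elementary row and column operations the resulting $(r-1)\times(r-1)$ determinant is brought into the stated form.
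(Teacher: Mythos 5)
A caveat on the target first: the paper does not prove this proposition at all. It is offered explicitly as a compendium of known results, with pointers to \cite{AD}, \cite{Av}, \cite{De}, \cite{GhKr}, \cite{JoPr}, \cite{KlLa}, \cite{Ma}, so the only fair comparison is with the literature the paper cites. Measured against that, your architecture is the right one and matches the sources: standard monomial/ASL theory for the flat degeneration (the Gr\"obner basis statement is \cite{HeTr} and \cite{Ku}), Kleppe--Laksov for grade and Cohen--Macaulayness (your codimension count $\binom{n-2r+2}{2}$ and the singular-locus codimension $2n-4r+5$ are both correct), Avramov \cite{Av} for factoriality, and the nonintersecting-lattice-path/Lindstr\"om--Gessel--Viennot enumeration of standard monomials for the Hilbert series, which is exactly how \cite{De} and \cite{GhKr} proceed; obtaining the multiplicity by evaluating the $h$-polynomial at $z=1$ via Vandermonde convolution (plus row and column operations, since the raw binomial tops depend on $i+j$) is likewise the computation in \cite{GhKr} and \cite{HeTr}.

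One step, however, is genuinely wrong as stated: that $R_r(X)$ being a domain ``is built into the ASL axioms, since the straightening relations generate a prime ideal.'' The ASL axioms do not imply primality: the Stanley--Reisner ring of the order complex of any poset is an ASL (a discrete one) on that poset, and it is a domain only when the poset is a chain. Primality of $I_r(X)$ needs a separate argument, and the standard one is precisely the localization induction the paper records as Proposition \ref{classicloc}: inverting $x_{n-1,n}$ gives $R_{r}(X)[x_{n-1,n}^{-1}]\simeq R_{r-1}(Z)[\Psi][x_{n-1,n}^{-1}]$, one checks that $x_{n-1,n}$ is a nonzerodivisor (e.g.\ from the Gr\"obner degeneration together with Cohen--Macaulayness), and induction on $r$ finishes; see \cite{JoPr}, \cite{KlLa}. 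Relatedly, ``Gorenstein \dots read off from the top-symmetry of $\Pi$'' is too quick: symmetry of the poset does not by itself yield Gorensteinness of an ASL. What does work is Stanley's criterion (a graded Cohen--Macaulay domain is Gorenstein if and only if its $h$-vector is symmetric), applied after you have the Hilbert series, or alternatively Avramov's factoriality combined with Murthy's theorem. With those two repairs, your outline is an accurate account of the known proofs that the proposition summarizes.
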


\noindent
The results contained in the above proposition show on the one hand that  pfaffian rings are endowed with many good properties, and on the other that its basic invariants can be computed despite their complexity.
We also recall that, by \cite{HeTr} and \cite{Ku}, the generators of $I_{r}(X)$ form a Gr\"obner basis with respect to any anti-diagonal order, and that similar results have been proven for larger classes of pfaffian ideals by the first author et al., see \cite{DeGo}, \cite{DeGo2}, \cite{DeSb}.

In the following proposition we recall a standard localization argument for pfaffian ideals (cf. \cite[Lemma 1.2]{JoPr}), an adaptation of which is needed for the proof of the Theorem \ref{redusion}, which follows from the fact that pfaffian ideals  are invariant under elementary operations on rows and columns which preserve skew-symmetry.

\begin{proposition}\label{classicloc}
  Let $X=(x_{ij})$, $Z=(z_{ij})$ be two skew-symmetric matrices of size $n\times n$ and $(n-2)\times (n-2)$ respectively, and let
 $\Psi$ be the set   
of the indeterminates contained in the $(n-1)${\rm th} and
$n${\rm th} rows and columns of $X$. 
Then the map\,  $\phi\: K[X][x_{n-1,n}^{-1}] \longrightarrow K[Z][\Psi][x_{n-1,n}^{-1}]$\, defined by
$$ \phi(x_{ij})=x_{ij} \text{\,\, if\, } x_{ij}\in\Psi\,\,\text{ and }\,\, \phi(x_{ij})=z_{ij}-\frac{x_{i,n}x_{j,n-1}-x_{i,n-1}x_{j,n}}{x_{n-1,n}}\,\, \text{ otherwise,} 
$$ is a ring isomorphism which yields that\,\,   $R_{r}(X)[x_{n-1,n}^{-1}]\simeq R_{r-1}(Z)[\Psi][x_{n-1,n}^{-1}].$
\end{proposition}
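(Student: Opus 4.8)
The plan is to split the statement into two parts: that $\phi$ is a ring isomorphism, and that it maps $I_r(X)$ onto $I_{r-1}(Z)$ after inverting $x_{n-1,n}$, from which the asserted isomorphism of localized coordinate rings follows at once. The first part is elementary. Set $B:=K[\Psi][x_{n-1,n}^{-1}]$ and let $\Lambda=\{x_{ij}:1\le i<j\le n-2\}$ collect the indeterminates of the upper-left block. Then $K[X][x_{n-1,n}^{-1}]=B[\Lambda]$ and $K[Z][\Psi][x_{n-1,n}^{-1}]=B[\{z_{ij}\}]$ are polynomial rings over $B$ on the same number of variables. Every indeterminate occurring in the correction term $\frac{x_{i,n}x_{j,n-1}-x_{i,n-1}x_{j,n}}{x_{n-1,n}}$ lies in $\Psi$, hence in $B$; so $\phi$ fixes $B$ and performs the triangular substitution $x_{ij}\mapsto z_{ij}-c_{ij}$ with $c_{ij}\in B$. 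Its inverse is the $B$-algebra map $z_{ij}\mapsto x_{ij}+c_{ij}$, so $\phi$ is an isomorphism and the whole content of the statement is the behaviour of the pfaffian ideal.

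To handle the ideal I would unwind the linear-algebra meaning of $\phi$. Inverting $x_{n-1,n}$ makes the lower-right $2\times2$ block $C=\begin{pmatrix}0 & x_{n-1,n}\\ -x_{n-1,n} & 0\end{pmatrix}$ of $X$ invertible. Writing $X=\begin{pmatrix}A & Y\\ -Y^{T} & C\end{pmatrix}$, with $A$ the skew-symmetric upper-left block and $Y$ the block formed by the last two columns restricted to the first $n-2$ rows, the unipotent congruence with $P=\begin{pmatrix}I & -YC^{-1}\\ 0 & I\end{pmatrix}$ gives $PXP^{T}=\begin{pmatrix}A' & 0\\ 0 & C\end{pmatrix}$, where $A'=A+YC^{-1}Y^{T}$ is again skew-symmetric. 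A direct computation of $YC^{-1}Y^{T}$ shows that its $(i,j)$ entry equals $\frac{x_{i,n}x_{j,n-1}-x_{i,n-1}x_{j,n}}{x_{n-1,n}}=c_{ij}$, so the $(i,j)$ entry of $A'$ is exactly $\phi^{-1}(z_{ij})$. In other words, $\phi$ is precisely the coordinate change that clears the last two rows and columns of $X$ and renames the resulting Schur complement $A'$ as the generic skew-symmetric matrix $Z$.

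It remains to match the pfaffian ideals. First, the $2r$-pfaffian ideal is invariant under congruence by an invertible matrix: the compound (Cauchy--Binet) formula for pfaffians writes every $2r$-pfaffian of $PXP^{T}$ as a $B$-linear combination of the $2r$-pfaffians of $X$, and applying the same to $P^{-1}$ (whose entries again lie in $B$, as $P$ is unipotent) yields the opposite inclusion; hence $I_r(X)[x_{n-1,n}^{-1}]=I_r(PXP^{T})[x_{n-1,n}^{-1}]$. Next, because $PXP^{T}$ is block diagonal, a $2r$-pfaffian $[a_1,\dots,a_{2r}]$ of it vanishes when its index set meets $\{n-1,n\}$ in exactly one element, equals a $2r$-pfaffian of $A'$ when it avoids $\{n-1,n\}$, and factors as $x_{n-1,n}\cdot[S]$ with $[S]$ a $(2r-2)$-pfaffian of $A'$ when it contains both $n-1$ and $n$. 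Since $x_{n-1,n}$ is a unit and, by the Laplace-type expansion of pfaffians, every $2r$-pfaffian of $A'$ already belongs to the ideal of its $(2r-2)$-pfaffians, these generators generate exactly $I_{r-1}(A')[x_{n-1,n}^{-1}]$. Finally $A'_{ij}=\phi^{-1}(z_{ij})$ gives $\phi^{-1}(I_{r-1}(Z))=I_{r-1}(A')$, and chaining the equalities produces $\phi(I_r(X))=I_{r-1}(Z)$ in the localized ring, whence $R_r(X)[x_{n-1,n}^{-1}]\simeq R_{r-1}(Z)[\Psi][x_{n-1,n}^{-1}]$.

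The main obstacle I anticipate is the bookkeeping of this last paragraph, and in particular justifying congruence-invariance of the pfaffian ideal over the localized ring rather than over a field, where a rank argument would suffice; the compound pfaffian formula is the clean tool for this. The explicit computation of $A'$ and the check that its entries coincide with $\phi^{-1}(z_{ij})$ are routine but sign-sensitive, while the block-diagonal expansion and the containment $I_r\subseteq I_{r-1}$ are formal.
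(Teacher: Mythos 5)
Your proof is correct and follows essentially the same route as the paper, which offers no written proof of its own but cites \cite[Lemma 1.2]{JoPr} together with the remark that pfaffian ideals are invariant under elementary row and column operations preserving skew-symmetry: your unipotent congruence $P$ is precisely a composition of such operations clearing the last two rows and columns, and the Cauchy--Binet formula for pfaffians is the standard way to make that invariance precise over the localized ring. The sign-sensitive identification $A'_{ij}=\phi^{-1}(z_{ij})$, the block-diagonal case analysis, and the containment $I_r(A')\subseteq I_{r-1}(A')$ via Laplace expansion all check out.
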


\section{Jet schemes and pfaffian Varieties}

Let $K$ be an algebraically closed field and let  $n, k$ be two fixed positive integers; let also $A^{n,k}=K[x_{ij}^{(h)} \: 1\leq i<j\leq  n,\;\;h=0,\ldots,k-1]$ be a polynomial ring over $K$, of dimension of $kn(n-1)/2$.
Now let $t$ be an indeterminate over $K$, and  
$X(t)=(x_{ij}(t))$ be the $n\times n$ skew-symmetric matrix with entries $x_{ij}(t)=x_{ij}^{(0)}+x_{ij}^{(1)} t+\ldots +x_{ij}^{(k-1)}t^{k-1}$,  for $i<j$.
Let $r\in\NN_+$, such that $2r \leq n$. The main object of our study are the so called {\em jet scheme ideals} (or {\em jet ideals} for short) $I^{n,k}_{r}\subseteq A^{n,k}$, and the quotient rings $R^{n,k}_r=A^{n,k}/I^{n,k}_{r}$, which we are going to define as  follows. Any pfaffian $p=p(t)$  of $X(t)$ can be written as $p=p^{(0)}+p^{(1)}t+\ldots+p^{(k-1)}t^{k-1}+p^{k}t^k+ \ldots +p^{(s)}t^{s}$, for some integer $s$ and $p^{(h)}\in A^{n,k}$,\, $h=1,\ldots,s$.\  The ideal $I^{n,k}_{r}$ is defined as the ideal generated by $p^{(h)}$, for all $h=0,\ldots,k-1$ and all $2r$-pfaffians $p$ of $X(t)$.

\begin{example}
Let $n=5,\, r=2,\, k=3$\, and\, $X(t)$ be the skew-symmetric matrix 
{\footnotesize $$X(t)=\begin{pmatrix} 0& x_{12}(t)& x_{13}(t)&x_{14}(t)&x_{15}(t)\\ \phantom{x_{12}(t)}&0&x_{23}(t)&x_{24}(t)&x_{25}(t)\\ \phantom{x_{12}(t)}& \phantom{x_{12}(t)}& 0& x_{34}(t)& x_{35}(t)\\ \phantom{x_{12}(t)}& \phantom{x_{12}(t)}& \phantom{x_{12}(t)}& 0& x_{45}(t)\\ \phantom{x_{12}(t)}&\phantom{x_{12}(t)}&\phantom{x_{12}(t)}&\phantom{x_{12}(t)}&0   
\end{pmatrix};$$}we are interested in ideals such as  $I_2^{5,3}=(p_i^{(h)} \: h=0,1,2,\,i=1,\ldots,5)$, where $p_i$ denotes here  the pfaffian of the matrix obtained from $X(t)$ by deleting its $i\thh$ row and column. For instance, {\footnotesize $p_5=x_{12}(t)x_{34}(t)-x_{13}(t)x_{24}(t)+x_{14}(t)x_{23}(t)$} gives rise to the three generators of $I_2^{5,3}$ 
{\footnotesize $$\begin{array}{c}p_5^{(0)}=x_{12}^{(0)}x_{34}^{(0)}-x_{13}^{(0)}x_{24}^{(0)}+x_{14}^{(0)}x_{23}^{(0)},\\[+.3cm]
  p_5^{(1)}=x_{12}^{(0)}x_{34}^{(1)}-x_{13}^{(0)}x_{24}^{(1)}+x_{14}^{(0)}x_{23}^{(1)}+x_{12}^{(1)}x_{34}^{(0)}-x_{13}^{(1)}x_{24}^{(0)}+x_{14}^{(1)}x_{23}^{(0)},\\[+.3cm]
  p_5^{(2)}=x_{12}^{(0)}x_{34}^{(2)}-x_{13}^{(0)}x_{24}^{(2)}+x_{14}^{(0)}x_{23}^{(2)}+x_{12}^{(1)}x_{34}^{(1)}-x_{13}^{(1)}x_{24}^{(1)}+x_{14}^{(1)}x_{23}^{(1)}+x_{12}^{(2)}x_{34}^{(0)}-x_{13}^{(2)}x_{24}^{(0)}+x_{14}^{(2)}x_{23}^{(0)},
  \end{array}$$
  }
\end{example}

\noindent
Clearly, when $k=1$, the ideal $I^{n,1}_{r}$ is the classical pfaffian ideal $I_{r}$ of a generic skew-symmetric matrix of size $n$ generated by all pfaffians of size $2r$ and $R_r^{n,1}\simeq R_r$ is the  quotient ring  which we described in the previous section.

\smallskip
The following lemma and theorem will be relevant for the induction arguments we are going to use later on. 

\begin{lemma}\label{zione}
Let $({\bf x}^{(0)})\subset K[X]$ denote the ideal generated by $x_{ij}^{(0)}$, for all\, $1\leq i <j \leq n$. 
  $$\begin{array}{cll}
    S^{n,k}_r/({\bf x}^{(0)}) & \simeq & \left\{\begin{array}{lc}
    R^{n,k-1}\ ,& \text{\,\, if\, } k\leq r;\\
    S^{n,k-r}_{r}[x_{ij}^{(h)}]_{h=k-r+1,\ldots,k-1}\ , & \text{ if\, } k>r.
    \end{array}\right.
    \end{array}
  $$
\end{lemma}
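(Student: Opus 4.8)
The plan is to understand the reduction modulo $({\bf x}^{(0)})$ at the level of the generators of $I^{n,k}_r$ and to recognise the resulting ideal as a jet ideal of lower jet level in a relabelled set of variables; throughout I read $S^{n,k}_r$ as the quotient $R^{n,k}_r=A^{n,k}/I^{n,k}_r$. First I would note that setting every $x_{ij}^{(0)}$ equal to zero turns each entry of $X(t)$ into $x_{ij}(t)=t\,\tilde x_{ij}(t)$, where $\tilde x_{ij}(t)=x_{ij}^{(1)}+x_{ij}^{(2)}t+\cdots+x_{ij}^{(k-1)}t^{k-2}$, so that $X(t)\equiv t\,\tilde X(t)\pmod{({\bf x}^{(0)})}$ with $\tilde X(t)=(\tilde x_{ij}(t))$. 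Since a $2r$-pfaffian $p$ is homogeneous of degree $r$ in the matrix entries, this gives $p\equiv t^{r}\,p(\tilde X(t))\pmod{({\bf x}^{(0)})}$; comparing coefficients of $t^{h}$, the image $\overline{p^{(h)}}$ of a generator vanishes for $h<r$ and equals the coefficient of $t^{\,h-r}$ in $p(\tilde X(t))$ for $h\ge r$.

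Next I would use the relabelling $y_{ij}^{(m)}:=x_{ij}^{(m+1)}$ to identify $A^{n,k}/({\bf x}^{(0)})=K[x_{ij}^{(h)}:1\le h\le k-1]$ with $A^{n,k-1}$, under which $\tilde X(t)$ becomes the generic $(k-1)$-jet matrix in the $y$-variables. The elementary but crucial remark is that the coefficient of $t^{m}$ in any pfaffian of this matrix involves only $y_{ij}^{(0)},\dots,y_{ij}^{(m)}$; hence for $0\le m\le k-1-r$ these coefficients coincide, as polynomials, with the defining generators of the jet ideal $I^{n,k-r}_r$ computed from the $(k-r)$-jet matrix, and no variable $y_{ij}^{(m)}$ with $m\ge k-r$ occurs among them.

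Then I would split into the two cases. If $k\le r$, every generator $p^{(h)}$ with $0\le h\le k-1$ has $h<r$, so all of them reduce to $0$; thus $I^{n,k}_r\subseteq({\bf x}^{(0)})$ and the quotient is the polynomial ring $A^{n,k}/({\bf x}^{(0)})\cong A^{n,k-1}$, which is the first case of the statement. If $k>r$, the surviving generators are those with $r\le h\le k-1$, i.e. the coefficients of $t^{0},\dots,t^{k-1-r}$ of the pfaffians of $\tilde X(t)$; by the previous paragraph the ideal they generate is the extension of $I^{n,k-r}_r$ to $A^{n,k-1}$, into which the variables $y_{ij}^{(m)}=x_{ij}^{(m+1)}$ with $m\ge k-r$, that is $x_{ij}^{(k-r+1)},\dots,x_{ij}^{(k-1)}$, do not enter. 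Treating these as free variables over the quotient then yields $A^{n,k-1}/I^{n,k-r}_rA^{n,k-1}\cong R^{n,k-r}_r[x_{ij}^{(h)}]_{h=k-r+1,\dots,k-1}$, which is the second case.

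The computation is essentially formal, and the one point I expect to require genuine care is checking that the image ideal is exactly $I^{n,k-r}_r$ and nothing larger: one must verify that $h\mapsto h-r$ sets up an honest bijection between the surviving generators $\overline{p^{(h)}}$ and the defining generators of the lower jet ideal, with no accidental relations, and that the top-level variables $x_{ij}^{(k-r+1)},\dots,x_{ij}^{(k-1)}$ remain algebraically independent over $R^{n,k-r}_r$. This is precisely where the homogeneity of the pfaffian (producing the clean factor $t^{r}$) and the stability of the low-order jet coefficients (so that truncating the jet level at $k-r$ loses no information) are doing the essential work.
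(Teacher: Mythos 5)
Your proof is correct and follows essentially the same route as the paper's: both reduce the generators $p^{(h)}$ modulo $({\bf x}^{(0)})$, observe that those with $h<r$ vanish (so that for $k\le r$ one gets $I^{n,k}_r\subseteq({\bf x}^{(0)})$ and a polynomial ring), and identify the surviving generators with $r\le h\le k-1$, after the index shift $x_{ij}^{(m+1)}\mapsto y_{ij}^{(m)}$, with the defining generators of $I^{n,k-r}_r$, in which the top variables $x_{ij}^{(k-r+1)},\dots,x_{ij}^{(k-1)}$ never occur. The only difference is cosmetic: where the paper argues by pigeonhole on the exponents $l_1+\cdots+l_r=h$ in the explicit expansion of $p^{(h)}$, you package the same computation as the homogeneity identity $p(t\,\widetilde X(t))=t^{r}p(\widetilde X(t))$, which streamlines the bookkeeping without changing the argument.
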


\begin{proof} 
  Let $p=[a_1,\ldots,a_{2r}]$ be a $2r$-pfaffian of $X(t)$. By one of the equivalent definition of pfaffian, we may write  $p=\sum_{\alpha\in\Pi}\sgn(\sigma_\alpha)x_{i_1,j_1}(t)\cdots x_{i_r,j_r}(t),$ where\,
      {\small $$\Pi:=\left\{ \{(i_1,j_1),\ldots,(i_r,j_r)\}\: \{i_1,\ldots,i_r,j_1,\ldots, j_r\}=\{a_1,\ldots,a_{2r}\},
      \begin{array}{l}i_1<i_2<\ldots<i_r,\,\text{ and }\\
     i_k<j_k\,\text{ for } k=1,\ldots,r 
      \end{array}\right\}$$ }
and, for $\alpha=\{(i_1,j_1),(i_2,j_2),\ldots,(i_r,j_r)\}$,\,
      $\sigma_{\alpha}$  is the permutation   
      {\footnotesize $\left [\begin{array}{cccccc} 1&2&3&4&\cdots&2r\\ i_1&j_1&i_2&j_2&\cdots&j_r
\end{array}\right ]$}.

\noindent      
Thus, $p=\sum_{h=0}^{s} p^{(h)}t^h$,  where $$p^{(h)}=\sum_{\alpha\in\Pi}\sgn(\sigma_\alpha)\sum_{l_1+\cdots +l_r=h}x^{(l_1)}_{i_1,j_1}\cdots x^{(l_r)}_{i_r,j_r}\ .$$ 
If $k\leq r$ then each $h\le k-1$ is strictly less than $r$ so that  all monomials appearing in each generator involve at least one  of the $x_{ij}^{(0)}$, so that $I^{n,k}_r\subseteq ({\bf x}^{(0)})$ and the first part of the statement is proven. Let  $k>r$. By the same argument, $(p^{(h)} \: 0\leq h\leq r-1)\subseteq ({\bf x}^{(0)})$. Now, if $r\leq h\leq k-1$, then 
$$\begin{array}{ll} p^{(h)}&=\sum\limits_{\alpha\in\Pi}\sgn(\sigma_\alpha)\left(\sum\limits_{\stackrel{l_1+\ldots+l_r=h}{l_i>0,\,\forall i}} x^{(l_1)}_{i_1,j_1}\cdots x^{(l_r)}_{i_r,j_r}+\sum\limits_{\stackrel{l_1+\ldots+l_r=h}{l_i>0,\,\exists i}} x^{(l_1)}_{i_1,j_1}\cdots x^{(l_r)}_{i_r,j_r}\right)\\
                          &=\sum\limits_{\alpha\in\Pi}\sgn(\sigma_\alpha)\left(\sum\limits_{\stackrel{l_1+\ldots+l_r=h}{l_i>0,\,\forall i}} x^{(l_1)}_{i_1,j_1}\cdots x^{(l_r)}_{i_r,j_r}\right)+g\ ,
\end{array}$$ where $g\in ({\bf x}^{(0)})$.
Now  we only need to observe that the first summand, after relabeling, is just a generator of $I^{n,k-r}_r$, and the proof is completed.
\end{proof}

\begin{remark}\label{modulotk}
In general, if $A$ is any ring, $f=f_0+f_1t+\cdots +f_st^s\in A[t]$ and $B=A[t]/(t^k)$,  then it is easy to see that $\ov{f}$ is invertible in $B$ if and only if $f_0$ is invertible in $A$. In our setting we consider jet ideals, which are ideals in the ring $A=A^{n,k}$, but we might as well consider them as ideals in $B=A[t]/(t^k)\simeq (K[t]/(t^k))[x_{ij}^{(h)} \: 1\leq i<j\leq  n,\;\;h=0,\ldots,k-1]$, where the above property comes at hand; in this case we say that we are {\em working modulo $t^k$}.
    
\end{remark}

\begin{theorem}\label{redusion}
Let $\Psi=\{x^{(h)}_{1n}, x^{(h)}_{2n},\ldots,x^{(h)}_{n-1,n}, x^{(h)}_{1,n-1}, x^{(h)}_{2,n-1},\ldots, x^{(h)}_{n-2,n-1} \: h=0,\ldots k-1\}$. Then  $$R^{n,k}_r[(x^{(0)}_{n-1,n})^{-1}]\simeq R^{n-2,k}_{r-1}[\Psi][(x^{(0)}_{n-1,n})^{-1}]\ .$$ Moreover, 
 there is a one-to-one correspondence between 
      $\{\goth p \in{\rm Min}(I^{n,k}_r) \: x^{(0)}_{n-1,n}\not\in \goth p\}$ and the set ${\rm Min}(I^{n-2,k}_{r-1})$ of minimal primes of $I^{n-2,k}_{r-1}$ which  preserves codimension.

\end{theorem}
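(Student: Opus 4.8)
The plan is to lift the classical localization isomorphism of Proposition~\ref{classicloc} to the jet level, the only genuinely new feature being that we must invert the single coordinate $x^{(0)}_{n-1,n}$ rather than the polynomial $x_{n-1,n}$. First I would pass to $A^{n,k}[(x^{(0)}_{n-1,n})^{-1}]$ and, following Remark~\ref{modulotk}, work modulo $t^{k}$: in the ring $\mathcal R=A^{n,k}[(x^{(0)}_{n-1,n})^{-1}][t]/(t^{k})$ the entry $x_{n-1,n}(t)$ has invertible constant term $x^{(0)}_{n-1,n}$ and is therefore a unit, so the Schur-complement substitution underlying Proposition~\ref{classicloc} can be carried out verbatim on the skew-symmetric matrix $X(t)$ viewed over $\mathcal R$.

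Concretely, I would introduce the reduced $(n-2)\times(n-2)$ skew-symmetric matrix $Z(t)=(z_{ij}(t))$, with $z_{ij}(t)=\sum_h z^{(h)}_{ij}t^h$, and define a $K$-algebra homomorphism $\Phi\colon A^{n,k}[(x^{(0)}_{n-1,n})^{-1}]\to A^{n-2,k}[\Psi][(x^{(0)}_{n-1,n})^{-1}]$ by letting $\Phi$ be the identity on $\Psi$ and setting $\Phi(x^{(h)}_{ij})$, for $1\le i<j\le n-2$, equal to the coefficient of $t^{h}$ in $z_{ij}(t)-\frac{x_{i,n}(t)x_{j,n-1}(t)-x_{i,n-1}(t)x_{j,n}(t)}{x_{n-1,n}(t)}$, an element of $A^{n-2,k}[\Psi][(x^{(0)}_{n-1,n})^{-1}][t]/(t^{k})$. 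Extending $\Phi$ to a $C$-algebra map $\widetilde\Phi$ with $\widetilde\Phi(t)=t$, where $C=K[t]/(t^{k})$, one checks that $\widetilde\Phi(x_{ij}(t))=\sum_h\Phi(x^{(h)}_{ij})t^h$ agrees on the matrix $X(t)$ exactly with the action of the classical $\phi$ on $X$, each indeterminate replaced by its jet polynomial. The inverse substitution $z_{ij}(t)\mapsto x_{ij}(t)+\frac{\cdots}{x_{n-1,n}(t)}$, read off in the same way, produces a two-sided inverse, so $\Phi$ is an isomorphism onto $A^{n-2,k}[\Psi][(x^{(0)}_{n-1,n})^{-1}]$; the count $\binom{n}{2}=\binom{n-2}{2}+(2n-3)$ confirms that source and target carry the same number of indeterminates.

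The heart of the matter is the identification of the ideals, and this is the step I expect to be the main obstacle. Since the reduction of Proposition~\ref{classicloc} is expressed by polynomial identities valid over any commutative ring in which the pivot is a unit, applying it over $\mathcal R$ shows that $\widetilde\Phi$ carries every $2r$-pfaffian $p(t)$ of $X(t)$ into the ideal generated by the $2(r-1)$-pfaffians of $Z(t)$; write $\widetilde\Phi(p(t))=\sum_{j}g_{j}(t)q_{j}(t)$ with $q_{j}(t)$ such pfaffians and $g_{j}(t)\in A^{n-2,k}[\Psi][(x^{(0)}_{n-1,n})^{-1}][t]/(t^{k})$. The subtle point is that the pfaffian ideal of $X(t)$ over $C$ is strictly smaller than the extension of the jet ideal $I^{n,k}_{r}$, so one cannot simply quote the classical equality of ideals; instead I would extract coefficients. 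Comparing the coefficient of $t^{h}$ on both sides gives $\Phi(p^{(h)})=\sum_{j}\sum_{a+b=h}g^{(a)}_{j}q^{(b)}_{j}$, and here every $q^{(b)}_{j}$ is, by the very definition of jet ideal, one of the generators of $I^{n-2,k}_{r-1}$. Hence $\Phi(p^{(h)})\in I^{n-2,k}_{r-1}$ for all $h$, so $\Phi(I^{n,k}_{r})\subseteq I^{n-2,k}_{r-1}$; running the identical argument with $\Phi^{-1}$ and the pfaffians of $Z(t)$ gives the reverse inclusion, whence $\Phi$ descends to the asserted isomorphism $R^{n,k}_{r}[(x^{(0)}_{n-1,n})^{-1}]\simeq R^{n-2,k}_{r-1}[\Psi][(x^{(0)}_{n-1,n})^{-1}]$.

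Finally, the correspondence of minimal primes I would read off from this isomorphism. The minimal primes of $I^{n,k}_{r}$ avoiding $x^{(0)}_{n-1,n}$ are exactly the minimal primes of $R^{n,k}_{r}[(x^{(0)}_{n-1,n})^{-1}]$, which the isomorphism identifies with those of $R^{n-2,k}_{r-1}[\Psi][(x^{(0)}_{n-1,n})^{-1}]$. Adjoining the polynomial variables $\Psi$ gives a height-preserving bijection $\mathfrak q\mapsto\mathfrak q\,A^{n-2,k}[\Psi]$ between $\mathrm{Min}(I^{n-2,k}_{r-1})$ and the minimal primes of $I^{n-2,k}_{r-1}A^{n-2,k}[\Psi]$; since $x^{(0)}_{n-1,n}$ is one of the $\Psi$-variables, none of these primes meets it and all of them survive the localization, yielding the desired one-to-one correspondence. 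Codimension is preserved by a dimension count: for corresponding primes one has $\dim(A^{n,k}/\mathfrak p)=\dim(A^{n-2,k}/\mathfrak q)+k(2n-3)$, because the $\Psi$ contribute $k(2n-3)$ free variables and localization does not alter the dimension of a surviving component, while $\dim A^{n,k}-\dim A^{n-2,k}=k(2n-3)$ as well; subtracting these two equalities gives $\codim\mathfrak p=\codim\mathfrak q$.
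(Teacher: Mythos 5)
Your proposal is correct and takes essentially the same approach as the paper: the paper's proof defines exactly the same truncated Schur-complement substitution $\phi(x_{ij}(t))=z_{ij}(t)-\frac{x_{i,n}(t)x_{j,n-1}(t)-x_{i,n-1}(t)x_{j,n}(t)}{x_{n-1,n}(t)}$, uses Remark \ref{modulotk} to invert $x_{n-1,n}(t)$ modulo $t^k$ after localizing at $x^{(0)}_{n-1,n}$, and then refers to the argument of \cite[Theorem 1.2]{KoSe} for the isomorphism and the codimension-preserving correspondence of minimal primes. Your coefficient-extraction step identifying $\Phi(I^{n,k}_r)$ with the extension of $I^{n-2,k}_{r-1}$, and the height and dimension bookkeeping for the primes, are precisely the details the paper delegates to that citation.
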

\begin{proof}
Let $Z(t)=(z_{ij}(t))$ be the $(n-2)\times(n-2)$ skew-symmetric matrix whose entries are 
$z_{ij}(t)=z_{ij}^{(0)}+z_{ij}^{(1)} t+\ldots +z_{ij}^{(k-1)}t^{k-1}$, where $z_{ij}^{(h)}$ are indeterminates, and   consider the assignment 
$$\phi(x_{ij}(t))=x_{ij}(t) \text{\,\, if\, } x_{ij}(t)\in\Psi,\, \text { and } \phi(x_{ij}(t))=z_{ij}(t)-\frac{x_{i,n}(t)x_{j,n-1}(t)-x_{i,n-1}(t)x_{j,n}(t)}{x_{n-1,n}(t)},$$
otherwise.
Working modulo $t^k$, by Remark \ref{modulotk}, one has that $x_{n-1,n}(t)$ is invertible since $x^{(0)}_{n-1,n}$ is invertible. Therefore, by arguing as in the proof of \cite[Theorem 1.2]{KoSe}, we obtain the desired isomorphism and correspondence.
\end{proof}

\begin{corollary}\label{unoetutti}
Let $\goth p$ be a minimal prime of $I^{n,k}_r$; then, $x^{(0)}_{ij}\in \goth p$ for some $i$ and $j$ if and only if $x^{(0)}_{ij}\in \goth p$ for all $i$ and $j$. 
\end{corollary}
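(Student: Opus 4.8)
The plan is to prove the contrapositive of the nontrivial implication: if $x^{(0)}_{ab}\notin\mathfrak{p}$ for \emph{some} pair $(a,b)$, then $x^{(0)}_{ij}\notin\mathfrak{p}$ for \emph{every} pair $(i,j)$. The reverse implication is immediate, and when $r=1$ the ideal $I^{n,k}_1$ is the whole irrelevant maximal ideal $(x^{(h)}_{ij})$, whose unique minimal prime contains all the $x^{(0)}_{ij}$, so the hypothesis is vacuous; thus I may assume $r\ge 2$. The first step is to exploit symmetry: the symmetric group $S_n$ acts on $A^{n,k}$ by simultaneously permuting the rows and columns of $X(t)$, and this action fixes $I^{n,k}_r$ while permuting the set $\{x^{(0)}_{ij}\}$ up to sign. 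Hence it permutes $\mathrm{Min}(I^{n,k}_r)$, and after replacing $\mathfrak{p}$ by a suitable $S_n$-translate I may assume that the pair avoiding $\mathfrak{p}$ is precisely $(a,b)=(n-1,n)$.

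Next I would localize at $x^{(0)}_{n-1,n}$ and invoke Theorem \ref{redusion}. Since $x^{(0)}_{n-1,n}\notin\mathfrak{p}$, the prime $\mathfrak{p}$ survives in $R^{n,k}_r[(x^{(0)}_{n-1,n})^{-1}]$ and, under the isomorphism $\phi$ with $R^{n-2,k}_{r-1}[\Psi][(x^{(0)}_{n-1,n})^{-1}]$, corresponds to a minimal prime of the latter ring. Because $I^{n-2,k}_{r-1}$ involves only the $z$-variables and the elements of $\Psi$ (in particular $x^{(0)}_{n-1,n}$) are free polynomial variables, the minimal primes of the target are exactly the extensions $\mathfrak{q}^e=\mathfrak{q}\,R^{n-2,k}_{r-1}[\Psi][(x^{(0)}_{n-1,n})^{-1}]$ with $\mathfrak{q}\in\mathrm{Min}(I^{n-2,k}_{r-1})$. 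Thus $x^{(0)}_{ij}\in\mathfrak{p}$ if and only if $\phi(x^{(0)}_{ij})\in\mathfrak{q}^e$, and it remains to check that this never happens.

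Here two cases arise. If the pair $(i,j)$ meets $\{n-1,n\}$, then $x^{(0)}_{ij}\in\Psi$ and $\phi$ fixes it, so $\phi(x^{(0)}_{ij})$ is a free variable and lies in no $\mathfrak{q}^e$. The remaining, and main, case is $i<j\le n-2$, where $\phi(x^{(0)}_{ij})=z^{(0)}_{ij}-\bigl(x^{(0)}_{i,n}x^{(0)}_{j,n-1}-x^{(0)}_{i,n-1}x^{(0)}_{j,n}\bigr)/x^{(0)}_{n-1,n}$. Working in the domain $D=R^{n-2,k}_{r-1}/\mathfrak{q}$, membership in $\mathfrak{q}^e$ amounts to the identity $\bar z^{(0)}_{ij}\,x^{(0)}_{n-1,n}=x^{(0)}_{i,n}x^{(0)}_{j,n-1}-x^{(0)}_{i,n-1}x^{(0)}_{j,n}$ in the polynomial ring $D[\Psi]$. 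I expect this to be the crux of the argument, but it is settled by a degree count in the $\Psi$-variables: the right-hand side is a nonzero form of degree $2$ in four distinct variables of $\Psi$, whereas the left-hand side has degree at most $1$, a contradiction. Hence $\phi(x^{(0)}_{ij})\notin\mathfrak{q}^e$ for every minimal prime $\mathfrak{q}$, so no $x^{(0)}_{ij}$ lies in $\mathfrak{p}$, which completes the contrapositive and the proof.
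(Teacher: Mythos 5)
Your proof is correct and follows essentially the same route as the paper: after the same symmetry reduction you localize at $x^{(0)}_{n-1,n}$, apply Theorem \ref{redusion}, and your degree count in the $\Psi$-variables is just a restatement of the paper's observation that the cleared identity $z^{(0)}_{ij}x^{(0)}_{n-1,n}-x^{(0)}_{i,n}x^{(0)}_{j,n-1}+x^{(0)}_{i,n-1}x^{(0)}_{j,n}\in\goth q[\Psi]$ forces the coefficients $\pm 1$ into the prime $\goth q$, a contradiction. Your extra explicit steps (the vacuous $r=1$ case and the easy case $x^{(0)}_{ij}\in\Psi$) are left implicit in the paper but change nothing of substance.
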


\begin{proof}\label{forus4} 
Without loss of generality, we only have to prove that, if $\goth p$ is a minimal prime of $I^{n,k}_r$ that does not contain  $x^{(0)}_{n-1,n}$, then $x^{(0)}_{ij}\not\in \goth p$ for any choice of $i<j$. The viceversa will then follow, by exchanging the roles of the indexes $n-1,n$ and $i,j$. Suppose by contradiction that there exist $i$ and $j$ such that $x_{ij}^{(0)}$ belongs to $\goth p$;  by the the previous theorem, $x_{ij}^{(0)}$ would be then mapped into the image  
$\goth q [\Psi][(x_{n-1,n}^{(0)})^{-1}]\subset R^{n-2,k}_{r-1}[\Psi][(x^{(0)}_{n-1,n})^{-1}]$ of $\goth p[(x_{n-1,n}^{(0)})^{-1}]$, where $\goth q\in{\rm Min}(I^{n-2,k}_{r-1})$. Since the image of $x_{ij}^{(0)}$ is $z_{ij}^{(0)}-(x_{in}^{(0)}x_{j,n-1}^{(0)}-x_{i,n-1}^{(0)}x_{jn}^{(0)})(x_{n-1,n}^{(0)})^{-1}$, it would follow that $z_{i,j}^{(0)}x_{n-1,n}^{(0)}-x_{i,n}^{(0)}x_{j,n-1}^{(0)}+x_{i,n-1}^{(0)}x_{j,n}^{(0)}\in \goth q[\Psi]$. Now, the coefficients of this expressions involve $\pm 1$, which would then belong to the prime ideal $\goth q$, a contradiction.

\end{proof}

 In the following we let $\mathcal{P}^{n,k}_{r}={\mathbb V}(I^{n,k}_{r})$ denote the pfaffian jet variety of $I^{n,k}_{r}$.   

\begin{remark}\label{sembrablabla}
  If we let $M=\{\goth p\in {\rm Min\,}(I_r^{n,k}) \,:\, x^{(0)}_{ij}\not\in \goth p \text{ for all } i, j\}$,\, $N=\{\goth p\in {\rm Min\,}(I_r^{n,k}) \,:\, x^{(0)}_{ij}\in \goth p \text{ for some } i, j\}$,\,$I=\bigcap\limits_{\goth p\in M} \goth p$\, and\, $J=\bigcap\limits_{\goth p\in N} \goth p$,\, then we may write
  $${\rm Min\,}(I_r^{n,k})=M\sqcup N,\,\,\,\,\,\,\text{and}\,\,\,\,\,\,\sqrt{I_r^{n,k}} =I\cap J.$$
  Let, as before, $({\bf x}^{(0)})$ be the ideal generated by all of the $x^{(0)}_{ij}$; moreover let \,\, $\mathcal{Z}={\mathbb  V}(I)$\,\, and\,\, $\mathcal{Y}=\mathcal{P}_r^{n,k}\cap {\mathbb  V}(({\bf x}^{(0)}))$. Clearly, \, $\mathcal{P}_r^{n,k}={\mathbb V}(I)\cup{\mathbb V}(J)=\mathcal{Y}\cup \mathcal{Z}$.\, Notice that $\mathcal{Z}$ is not empty and\, $\mathcal{Z}\not\subseteq\mathcal{Y}$;\, moreover, $\mathcal{Z}$ is the union of its irreducible components which correspond to the set of minimal primes defining $I$, i.e. those not containing $x_{n-1,n}^{(0)}$. By Theorem \ref{redusion}, there is a bijection between these  and the irreducible components of $\mathcal{P}^{n-2,k}_{r-1}$, which preserves codimensions.  Finally, by Lemma \ref{zione}, $\mathcal{Y}$ is isomorphic to either $\AA^{n(n-1)(k-1)/2}$ when $k\leq r$ or to $\mathcal{P}_r^{n,k-r}\times\AA^{n(n-1)(r-1)/2}$ when $k>r$.
\end{remark}

\section{The irreducible cases}
Philosophically speaking, for classical determinantal and pfaffian varieties, the so called maximal cases are usually endowed with better properties. In this section and in the next one we shall focus on the problem of irreducibility of pfaffian jet varieties (or {\em pfaffian jet varieties} for short) and it will turn  out that this general behaviour is confirmed.

In this section we shall consider the special cases when $2r=n$ and $2r+1=n$, and prove that the corresponding pfaffian jet varieties are irreducible.

\medskip
\noindent
\framebox{$n=2r$.} This is the easiest case: the ideal $I^{2r,k}_r=I^{n,k}_{n/2}$ has  exactly $k$ generators, which are  $p^{(h)}$, $h=0,\ldots,k-1$, where $p=[1,\ldots,n]$, the pfaffian of the whole matrix $X(t)$. We consider now the degrevlex order $\prec$ on the monomials of $A^{n,k}$ induced by  {\footnotesize $x_{1,2}^{(k-1)}>\ldots >x_{1,n}^{(k-1)}>x_{2,3}^{(k-1)}>\ldots >x_{n-1,n}^{(k-1)}>x_{1,2}^{(k-2)}>\ldots >x_{1,n}^{(k-2)}>x_{2,3}^{(k-2))}>\ldots >x_{n-1,n}^{(k-2)}>\ldots>x_{1,2}^{(0)}>\ldots>x_{1,n}^{(0)}>x_{2,3}^{(0)}>\ldots>x_{n-1,n}^{(0)}$}.

\begin{theorem}\label{main1} Let $n=2r$. Then,
\begin{itemize}
\item[a)] the Gr\"obner basis w.r.t. to $\prec$  of the pfaffian jet ideal $I^{2r,k}_r$ is given by $$[1,\ldots,n]^{(0)},\ldots,[1,\ldots,n]^{(k-1)};$$
\item[b)] the jet pfaffian ideal $I^{2r,k}_r$ is prime of codimension $k$;\\
\item[c)] $R^{2r,k}_r$ is a complete intersection and, therefore, Cohen-Macaulay  of multiplicity $r^k$.
\end{itemize}
\end{theorem}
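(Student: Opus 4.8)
The plan is to establish (a) first, deduce (b) and (c) from the resulting complete intersection structure, and treat primeness by a separate induction.

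The heart of (a) is the computation of the initial monomials $\ini_\prec(p^{(h)})$, $h=0,\dots,k-1$. Each $p^{(h)}$ is homogeneous of degree $r$: by the expansion used in the proof of Lemma~\ref{zione}, every monomial of $p^{(h)}$ has the form $x^{(l_1)}_{i_1j_1}\cdots x^{(l_r)}_{i_rj_r}$, where $\{(i_s,j_s)\}$ is a perfect matching of $\{1,\dots,n\}$ and $l_1+\cdots+l_r=h$; since the pairs $(i_s,j_s)$ are distinct, such a monomial is squarefree. Because $\prec$ orders the variables first by level (higher level larger) and then, within a level, lexicographically on the index pairs — which on each level is an anti-diagonal order in the sense of \cite{HeTr},\cite{Ku} — the $\prec$-largest monomial of $p^{(h)}$ is obtained by spreading the level budget $h$ over the $r$ factors as evenly as possible and then fixing the matching by the reverse-lexicographic tie-break; for $h=0$ one recovers the classical antidiagonal term $x^{(0)}_{1,n}x^{(0)}_{2,n-1}\cdots x^{(0)}_{r,r+1}$. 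The decisive claim is that the resulting $k$ squarefree monomials of degree $r$ have \emph{pairwise disjoint supports}: in the explicit description the levels carried grow steadily with $h$ while the matchings rotate, so that no variable is reused. Granting this, $\ini_\prec(p^{(h)})$ and $\ini_\prec(p^{(h')})$ are coprime for $h\neq h'$, Buchberger's coprimeness criterion shows that every $S$-polynomial reduces to zero, and hence $\{p^{(0)},\dots,p^{(k-1)}\}$ is a Gr\"obner basis, which is (a).

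Granting (a), the ideal $\ini_\prec(I^{2r,k}_r)=(\ini_\prec(p^{(0)}),\dots,\ini_\prec(p^{(k-1)}))$ is generated by $k$ monomials with pairwise disjoint supports, hence is a complete intersection of codimension $k$ whose quotient is Cohen--Macaulay. Since codimension and the Hilbert series are unchanged on passing to the initial ideal, and since Cohen--Macaulayness descends from $A^{2r,k}/\ini_\prec(I^{2r,k}_r)$ to $R^{2r,k}_r$, the ideal $I^{2r,k}_r$ has codimension $k$; being generated by $k$ elements it is then a complete intersection, and the $p^{(h)}$ form a regular sequence of forms of degree $r$. Therefore $R^{2r,k}_r$ is Cohen--Macaulay of multiplicity $\prod_{h=0}^{k-1}\deg p^{(h)}=r^{k}$, which gives (c) and the codimension assertion of (b).

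It remains to prove that $I^{2r,k}_r$ is prime, which I would do by induction on $r$. The case $r=1$ is clear, since $I^{2,k}_1=(x^{(0)}_{12},\dots,x^{(k-1)}_{12})$. For the inductive step, note that the initial ideal found above is squarefree, hence radical, so $I^{2r,k}_r$ is radical and it suffices to show that $\mathcal{P}^{2r,k}_r$ is irreducible. Write $\mathcal{P}^{2r,k}_r=\mathcal{Y}\cup\mathcal{Z}$ as in Remark~\ref{sembrablabla}. By Theorem~\ref{redusion} the components of $\mathcal{Z}$ correspond bijectively, preserving codimension, to those of $\mathcal{P}^{2r-2,k}_{r-1}$, which is irreducible by the inductive hypothesis; hence $\mathcal{Z}$ is irreducible of codimension $k$. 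Using the description of $\mathcal{Y}$ in Remark~\ref{sembrablabla} together with the dimension formula furnished by (c) (valid for every number of levels), a direct computation gives $\dim\mathcal{Y}<\dim\mathcal{Z}$ for $r\geq 2$, so $\mathcal{Y}\subseteq\mathcal{Z}$ and $\mathcal{P}^{2r,k}_r=\mathcal{Z}$ is irreducible. Combined with radicality this yields that $I^{2r,k}_r$ is prime, completing (b).

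The main obstacle is the very first step: identifying $\ini_\prec(p^{(h)})$ exactly and proving that the supports of $\ini_\prec(p^{(0)}),\dots,\ini_\prec(p^{(k-1)})$ are pairwise disjoint. The degrevlex tie-break interacts delicately with both the choice of matching and the partition of the level budget $h$ among the $r$ factors, and turning the informal ``balanced levels plus rotating matching'' picture into a precise, uniformly valid description — and then reading off coprimeness of the leading terms — is where the genuine combinatorial work lies; once that coprimeness is in hand, everything else is formal.
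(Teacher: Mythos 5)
Your overall architecture (coprime leading terms $\Rightarrow$ Gr\"obner basis $\Rightarrow$ complete intersection, codimension $k$, CM, multiplicity $r^k$) is exactly the paper's for parts (a) and (c), but the decisive content of (a) is missing from your write-up, and you say so yourself: you never actually identify $\ini_\prec(p^{(h)})$ nor prove that the supports are pairwise disjoint, offering only the heuristic ``balanced levels plus rotating matchings.'' That computation \emph{is} the proof of (a) in the paper: writing $h=qr+t$ with $0\le t\le r-1$, the leading term is exhibited in closed form as
$$\ini_\prec(p^{(h)})=x^{(q)}_{1,2r-2t}\cdots x^{(q)}_{r-t-1,r-t+2}\cdot x^{(q)}_{r-t,r-t+1}\cdot x^{(q+1)}_{2r-2t+1,2r}\cdot x^{(q+1)}_{2r-2t+2,2r-1}\cdots x^{(q+1)}_{2r-t,2r-t+1}\,,$$
i.e.\ $r-t$ level-$q$ variables on the antidiagonal of the leading principal block and $t$ level-$(q+1)$ variables on the antidiagonal of the trailing block; coprimality can then be read off (within a fixed $q$ the index pairs in the two blocks have constant sums $2r-2t+1$ and $4r-2t+1$, which move with $t$, and the level-$(q+1)$ blocks of consecutive $q$'s occupy disjoint index ranges). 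Without this, everything you derive formally downstream — radicality, codimension $k$, Cohen--Macaulayness, $e=r^k$ — rests on an unproven step.

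For primeness in (b) your route is genuinely different from the paper's (radical $+$ irreducible, versus the paper's direct domain argument), but as written it contains a non sequitur: from $\dim\mathcal{Y}<\dim\mathcal{Z}$ you conclude $\mathcal{Y}\subseteq\mathcal{Z}$, which is false in general — a union of closed sets can have a lower-dimensional irreducible component inside $\mathcal{Y}$ (a line union a plane). The gap is repairable with an ingredient you already hold but never invoke: by your (c), $R^{2r,k}_r$ is Cohen--Macaulay, hence unmixed, so $\mathcal{P}^{2r,k}_r$ is equidimensional and no component of it can lie in the lower-dimensional $\mathcal{Y}$; then $\mathcal{P}^{2r,k}_r=\mathcal{Z}$, and radical plus irreducible gives prime. (Your dimension count for $\mathcal{Y}$ also uses codimension $k-r$ for $\mathcal{P}^{n,k-r}_r$, i.e.\ the statement at a smaller number of levels; this is legitimate only because your codimension claim came from (a) alone, uniformly in $k$ — worth saying explicitly.) Compare the paper's cleaner mechanism, which needs neither radicality nor the $\mathcal{Y}\cup\mathcal{Z}$ decomposition: by Theorem \ref{redusion} and induction on $r$, the localization $R^{2r,k}_r[(x^{(0)}_{n-1,n})^{-1}]\simeq R^{2r-2,k}_{r-1}[\Psi][(x^{(0)}_{n-1,n})^{-1}]$ is a domain; by (a) the variable $x^{(0)}_{n-1,n}$ divides no leading term, so it is regular modulo $\ini_\prec I^{2r,k}_r$ and hence, by the standard revlex transfer, regular on $R^{2r,k}_r$; therefore $R^{2r,k}_r$ embeds into its localization and is itself a domain, and the dimension count of the localization gives codimension $k$ at once.
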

\begin{proof}
Let $p=[1,\ldots,n]$. For the proof of {\em a})\, it is sufficient to observe that the leading term  of $p^{(h)}=\sum_{\alpha\in\Pi}\sgn(\sigma_\alpha)\sum_{l_1+\cdots +l_r=h}x^{(l_1)}_{i_1,j_1}\cdots x^{(l_r)}_{i_r,j_r}$ is given by 
$$\ini_\prec(p^{(h)})=x_{1,2r-2t}^{(q)}\cdots x_{r-t-1,r-t+2}^{(q)}\cdot x_{r-t,r-t+1}^{(q)} \cdot x_{2r-2t+1,2r}^{(q+1)}\cdot x_{2r-2t+2,2r-1}^{(q+1)}\cdots x_{2r-t,2r-t+1}^{(q+1)}\,\,,$$ where $h=qr+t$, with $0\leq t\leq r-1$. Therefore, it is easy to see that any two distinct  such monomials are relatively prime, and it is well known that in
this situation one has a Gr\"obner basis.

\noindent
{\em b})\,: We shall prove that $R^{2r,k}_r$ is a domain of dimension $d:=k(n-2)(n+1)/2$ by induction on $r$. If $r=1$ and $n=2$, then $I^{2,k}_1=(x_{12}^h \: h=0,\ldots,k-1)$ and $R^{2,k}_1\simeq K$ is a $0$-dimensional domain. We now let $r>1$ and assume that $R^{n-2,k}_{r-1}$ is a domain of dimension $k(n-4)(n-1)/2$; let also $\Psi$ be as in Theorem \ref{redusion}. Then, $R^{n-2,k}_{r-1}[\Psi]$ has dimension $k(n-4)(n-1)/2+k(n-1+n-2)=d$, which is also the dimension of $R^{n-2,k}_{r-1}[\Psi][(x^{(0)}_{n-1,n})^{-1}]$. Theorem \ref{redusion} now yields that $R^{n,k}_r[(x^{(0)}_{n-1,n})^{-1}]$  is a domain and so is $R^{n,k}_r$: in fact, by Part {\em a}), $x_{n-1,n}^{(0)}$ is regular modulo $\ini_\prec I^{2r,k}_r$ and, by a well known property of the reverse lexicographic order, it is also $R^{2r,k}_r$-regular. Finally, $\dim R^{n,k}_r=\dim R^{n,k}_r[(x^{(0)}_{n-1,n})^{-1}]=d$, as desired. 
\noindent
Part {\em c}) follows now easily from the above.
\end{proof}

\begin{remark}\label{aldo} If $n=2r$, the $k$th jet scheme is generated by a complete intersection of $k$ elements of degree $r$, for any $k$; therefore we also know the Hilbert series of $R^{n, k}_r$, which is
$$
  H(R^{n,k}, z) =  \frac{\prod_{j=1}^k \sum_{i=1}^{r-1}z^i}{(1-z)^{\dim A^{n,k}}}=\left(\frac{\sum_{i=1}^{r-1}z^i}{(1-z)^{\dim A^{n,1}}}\right)^k.$$
  Thus,
  \begin{equation}\label{oone}
    H(R^{n,k}, z) =  H(R^{n,1}, z)^k  \text{\,\,\, and, consequently\,}
  \end{equation}
  \begin{equation}\label{ttwo}
    \dim R^{n,k}=k\dim R^{n,1},\,\,\,\,e(R^{n,k})=e(R^{n,1})^k
    .
  \end{equation}
\end{remark}

\medskip
\noindent
\framebox{$n=2r+1$.} We recall first some general basic facts and set some notation useful henceforth.

Let $M$ be any $m \times m$\, skew-symmetric matrix. 
 For for $l=1,\ldots, m$  we denote by $M_l$ the $(m-1) \times (m-1)$ skew-symmetric matrix obtained by removing the $l$th row and column from $M$. Accordingly, we denote by  $\pf_l(M)$ the pfaffian of $M_l$. 
Let  $\lambda\in K$; if $\lambda\neq 0$, we let $E_i(\lambda)$ be the elementary row multiplication matrix and, for any $\lambda$, we let  $E_{ij}(\lambda)$ be an elementary row addition matrix. If we let\, $M'=E_i(\lambda)\, M\, E_i(\lambda)$\, and\, $M''=E_{ij}(\lambda)\, M\, E_{ij}(\lambda)^{\rm t}$, then it is well-known that
${\rm pf\,} M' = |\lambda|{\rm pf\,} M$\, and\,  ${\rm pf\,} M'' = {\rm pf\,} M$.

Let now $M$ be a skew-symmetric matrix of size $2r+1$. We associate a point $\mathscr{ P}\in {\mathbb A}_K^{(2r+1)rk}$ of coordinates $p_{ij}^{(h)}$, where $1\leq i < j \leq 2r+1$,\, $0\leq h \leq k-1$, with the  matrix $A_\mathscr{ P}=(p_{ij}(t))$, where 
$p_{ij}(t)=p_{ij}^{(0)}+p_{ij}^{(1)}t+ \cdots + p_{ij}^{(k-1)}t^{k-1}$, and viceversa. Clearly, $\mathscr{P}\in \mathcal{P}_r^{2r+1,k}$  if and only if the first $k$ coefficients $\pf_l(A_\mathscr{P})^{(0)},\ldots, \pf_l(A_\mathscr{P})^{(k-1)}$\, of\, $\pf_l(A_\mathscr{P})$\, are zero for all  $l=1,\ldots, 2r+1$. 

\begin{lemma}\label{smanettoso} 
  Let  $\mathcal{Y}$, $\mathcal{Z}\subset \mathcal{P}_r^{n,k}$  be as in Remark \ref{sembrablabla},  and for any\, $i, j$,\, let $\mathcal{U}_{ij}$ be the open set of $\mathcal{P}_r^{n,k}$ where $x_{ij}^{(0)}\not=0$. Then one has:
\begin{itemize}
\item[a)] $\mathcal{Z}= \ov{\mathcal{U}_{ij}}$,\, where $\ov{\phantom{\mathcal{U}}}$ denotes the Zariski closure;
\item[b)]  if $n=2r+1$, then $\mathcal{Y} \subseteq \mathcal{Z}$. 
\end{itemize}
\end{lemma}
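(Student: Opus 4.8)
\emph{Part a).} The inclusion $\mathcal{U}_{ij}\subseteq\mathcal{Z}$ (hence $\ov{\mathcal{U}_{ij}}\subseteq\mathcal{Z}$, as $\mathcal Z$ is closed) will be immediate once I identify which components of $\mathcal{P}_r^{n,k}$ meet $\mathcal{U}_{ij}$. The plan is purely formal: an irreducible component $\mathbb V(\goth p)$, with $\goth p\in{\rm Min}(I_r^{n,k})$, meets $\mathcal{U}_{ij}$ exactly when $x_{ij}^{(0)}\notin\goth p$, and for an open dense subset $\mathcal U_{ij}\cap\mathbb V(\goth p)$ one has $\ov{\mathcal U_{ij}\cap\mathbb V(\goth p)}=\mathbb V(\goth p)$ by irreducibility. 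By Corollary~\ref{unoetutti} the condition $x_{ij}^{(0)}\notin\goth p$ is \emph{independent of the chosen pair} $i,j$ and singles out precisely the set $M$ of Remark~\ref{sembrablabla}. Therefore $\ov{\mathcal U_{ij}}=\bigcup_{\goth p\in M}\mathbb V(\goth p)=\mathbb V\bigl(\bigcap_{\goth p\in M}\goth p\bigr)=\mathbb V(I)=\mathcal Z$, for every $i,j$, which is the assertion.

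\emph{Part b), strategy.} By part~a) it suffices to show that every point of $\mathcal{Y}$ lies in $\ov{\bigcup_{i,j}\mathcal U_{ij}}=\mathcal Z$, i.e.\ is a limit of points of $\mathcal{P}_r^{n,k}$ having some nonzero constant coordinate. Since $\mathcal Z$ is closed, I only need to verify this on a dense subset of each component of $\mathcal Y$. Represent a point of $\mathcal Y$ by $A(t)=t\,B(t)$ with $B(t)$ skew-symmetric of size $n=2r+1$; on a dense set $B$ has generic rank $2r$ over $K((t))$, so its kernel over $K((t))$ is spanned by a unimodular vector $\hat w\in K[[t]]^n$ with $\hat w(0)\neq0$ and $B\hat w=0$ (note that the pfaffian kernel vector $(\pm\pf_l(B))_l$ itself vanishes at $t=0$ when $k>r$, which is why one must divide out the common $t$-power to obtain $\hat w$). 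The key point is that the congruence action $A\mapsto g\,A\,g^{\mathrm t}$ by $g\in\mathrm{GL}_n\bigl(K[t]/(t^k)\bigr)$ preserves $\mathcal{P}_r^{n,k}$, $\mathcal Y$, $\mathcal Z$ and each $\mathcal U_{ij}$ (the $2r$-pfaffians of $g\,A\,g^{\mathrm t}$ lie in the ideal generated by those of $A$, and $g(0)$ is invertible). Choosing $g$ with last row $\hat w^{\mathrm t}$ makes $g\,B\,g^{\mathrm t}$ have vanishing last row and column, so after this reduction I may assume
\[
A(t)=\begin{pmatrix} t\,B'(t) & 0\\[1mm] 0 & 0\end{pmatrix},\qquad B'(t)\ \text{skew of size }2r .
\]

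\emph{Part b), transfer to the even case.} In this block form the only surviving maximal pfaffian is $\pf_{n}(A)=t^{r}\pf(B')$, while $\pf_l(A)=0$ for $l\le 2r$; hence $A\in\mathcal{P}_r^{2r+1,k}$ forces $\pf(B')\equiv0\pmod{t^{k-r}}$, that is $tB'\in\mathcal{P}_r^{2r,k}$, and moreover $tB'$ lies in the ``$\mathcal Y$'' of the even case. By Theorem~\ref{main1} the ring $R^{2r,k}_r$ is a domain, so $\mathcal{P}_r^{2r,k}$ is irreducible; since it contains points with nonzero constant term (e.g.\ a constant jet at a rank-$(2r-2)$ matrix), its analogue $\mathcal Z^{2r}$ is everything, and in particular $tB'\in\mathcal Z^{2r}$. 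Thus there is a family $\widetilde A_s(t)\in\mathcal{P}_r^{2r,k}$ with $\widetilde A_0=tB'$ and $\widetilde A_s(0)\neq0$ for $s\neq0$. Re-embedding, $A_s:=\mathrm{diag}(\widetilde A_s,0)$ satisfies $\pf_{n}(A_s)=\pf(\widetilde A_s)\equiv0\pmod{t^k}$ and $\pf_l(A_s)=0$ for $l\le 2r$ (a vanishing row), so $A_s\in\mathcal{P}_r^{2r+1,k}$, while $A_s(0)\neq0$ and $A_s\to A$; hence $A\in\mathcal Z$. Undoing the congruence gives the same conclusion for the original point, and taking closures yields $\mathcal Y\subseteq\mathcal Z$.

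\emph{Main obstacle.} The delicate step is the passage ``it suffices to argue on a dense subset'': one must know that the generic-rank locus (where $\hat w$ with $\hat w(0)\neq0$ exists) is dense in \emph{every} component of $\mathcal Y$. For $k\le r$ this is clear, since by Lemma~\ref{zione} $\mathcal Y\cong\AA^{n(n-1)(k-1)/2}$ is irreducible. For $k>r$, where Lemma~\ref{zione} gives $\mathcal Y\cong\mathcal{P}_r^{n,k-r}\times\AA^{n(n-1)(r-1)/2}$, controlling the components of $\mathcal Y$ reduces the problem to the irreducibility of $\mathcal{P}_r^{2r+1,k-r}$, i.e.\ to the same statement with a strictly smaller value of $k$; I would therefore organize part~b) as an induction on $k$, the base $k\le r$ being settled directly as above. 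The genuinely load-bearing inputs are the oddness of $n$ (which produces the kernel vector $\hat w$ and hence the even block $B'$) together with the already-established irreducibility of the even maximal case in Theorem~\ref{main1}.
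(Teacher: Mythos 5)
Your part a) is correct and is, in substance, the argument the paper delegates to \cite[Lemma 2.7]{KoSe}: a component $\mathbb{V}(\goth p)$ meets $\mathcal{U}_{ij}$ iff $x_{ij}^{(0)}\notin\goth p$, Corollary \ref{unoetutti} makes this condition independent of $(i,j)$ and equal to membership in $M$, and closures of dense open subsets of irreducible components do the rest.

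Your part b), however, takes a genuinely different route. The paper argues pointwise and explicitly: given $\mathscr{Q}\in\mathcal{Y}$, it applies a single elementary congruence $E_{i_0+1,i_0}(\lambda/t^s)$ at an entry of minimal positive $t$-order and checks directly that the resulting one-parameter family stays in $\mathcal{P}_r^{2r+1,k}$ while entering some $\mathcal{U}_{ij}$ for $\lambda\neq0$; this needs no genericity, no induction, and no input from Theorem \ref{main1}, which keeps the induction on $r$ in Theorem \ref{main2} clean --- though it does require delicate valuation bookkeeping, since the correction term $(\lambda/t^s)\pf_{i_0+1}(A_{\mathscr{Q}})$ is a priori only divisible by $t^{k-s}$, so the choice and positioning of the pivot entry actually matter there. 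You instead normalize a generic point $A=tB$ by a full congruence built from the kernel vector $\hat w$ of the odd-size matrix $B$, reduce to the even block $tB'$, and import irreducibility of $\mathcal{P}_r^{2r,k}$ from Theorem \ref{main1}; this is conceptually transparent (it explains why oddness of $n$ forces $\mathcal{Y}\subseteq\mathcal{Z}$) and all the individual steps check out: the congruence identities are exact over $K[t]$, $\iota(\ov{\mathcal{O}})\subseteq\ov{\iota(\mathcal{O})}\subseteq\mathcal{Z}$ by continuity of the block embedding, and undoing $g$ is legitimate because the action preserves $\mathcal{Z}$. Two caveats on your writeup. First, the action does \emph{not} preserve each individual $\mathcal{U}_{ij}$ (a congruence mixes entries); it preserves the union $\bigcup_{i,j}\mathcal{U}_{ij}$, since $g(0)$ invertible gives $(gAg^{\mathrm t})(0)\neq0$ iff $A(0)\neq0$, and that suffices because by part a) every $\ov{\mathcal{U}_{ij}}$ equals $\mathcal{Z}$. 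Second, your ``main obstacle'' is real as you set it up, and your induction on $k$ does work --- it must be run jointly with Theorem \ref{main2} (part b) at level $k$ invokes irreducibility at level $k-r<k$, whose proof invokes part b) at level $k-r$, so the descent is well-founded with base $k\le r$) --- but it is avoidable: since $r\ge2$, the top coefficient matrix $B^{(k-2)}$ (the variables $x_{ij}^{(k-1)}$) is among the \emph{free} variables in the description of $\mathcal{Y}$ from Lemma \ref{zione}, and the coefficient of $t^{r(k-2)}$ in $\pf_l(B)$ is exactly $\pf_l(B^{(k-2)})$; perturbing only the free coordinates stays inside any given irreducible component of $\mathcal{Y}\cong\mathcal{P}_r^{n,k-r}\times\AA^{(r-1)n(n-1)/2}$, so the generic-rank locus meets, hence is dense in, every component, with no induction and no knowledge of the components of $\mathcal{P}_r^{n,k-r}$. (Both your argument and the paper's implicitly assume $r\ge2$, i.e.\ $\mathcal{U}_{ij}\neq\emptyset$; this is harmless, as the lemma is only used for $r>1$ in Theorem \ref{main2}.)
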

\begin{proof}  
Part {\em a}) can be proved along the lines of the proof of  \cite[Lemma  2.7]{KoSe}. For {\it b}), we prove that for all $\mathscr{Q} \in \mathcal{Y}$ there exist $i, j$, with\, $i < j$\, such that $\mathscr{Q}\in \ov{\mathcal{U}_{ij}}$; in this way the conclusion will be yielded  by Part {\em a}).

Let\, $A_\mathscr{Q}=(q_{ij}(t))$,\, with\,  $q_{ij}(t)=q_{i, j}^{(0)}+q_{i, j}^{(1)}t+ \cdots + q_{i, j}^{(k-1)}t^{k-1}$,\, be the matrix associated with $\mathscr{Q}$.
Recall that $\mathscr{Q}$ belongs to $\mathcal{Y}$ exactly when it belongs to $\mathcal{P}_r^{2r+1,k}$ and $q_{ij}^{(0)}=0$ for all $i, j$.
If $\mathscr{Q}$ is the origin, we choose any $i <j$ and a point $\mathscr{P}\in \mathcal{U}_{ij}$, since the latter is not empty, and consider the family of points $\mathscr{P}_\lambda=\lambda \mathscr{P}$, with $\lambda\in K$. Clearly, $\mathscr{P}_0=\mathscr{Q}$\, and $\mathscr{P}_\lambda\in \mathcal{U}_{ij}$ for all $\lambda\neq 0$; therefore, by the properties of   Zariski topology, $\mathscr{Q}\in\ov{\mathcal{U}_{ij}}$.

If  $\mathscr{Q}$ is not the origin, consider\, $i_0, j_0$\, such that\, $q_{i_0, j_0}(t)\neq 0$; by exchanging rows and columns if necessary, we may  assume without loss of generality that\, $i_0\not=2r+1$\, and\, $j_0>i_0+1$.\, Let  $s >0$ be the minimum such that $q_{i_0, j_0}^{(s)}\neq 0$. We shall show that $\mathscr{Q}\in \ov{\mathcal{U}_{i_0+1, j_0}}$. 
Let $E=E_{i_0+1, i_0}(\lambda/t^s)$ be an elementary row addition  matrix; starting with $A_\mathscr{Q}$ we construct a family of matrices $A_{\mathscr{P}_\lambda}=E \, A_\mathscr{Q}\, E^{\rm t}$, depending on a parameter $\lambda \in K$ , which corresponds to a family of points $\mathscr{P}_{\lambda}$. Clearly, $\mathscr{P}_0=\mathscr{Q}$. Moreover, for all $\lambda\neq 0$, we have that $\mathscr{P}_\lambda\in \mathcal{U}_{i_0+1, j_0}$, since $(p_\lambda)_{i_0+1, j_0}^{(0)} = q_{i_0+1, j_0}^{(0)} + \lambda q_{i_0, j_0}^{(s)}=\lambda q_{i_0, j_0}^{(s)}\neq 0$. 
In order to conclude the proof now we only need to show that $\mathscr{P}_\lambda \in \mathcal{P}_r^{2r+1,k}$ for all $\lambda\neq 0$, i.e. that,\, $\pf_l(A_{\mathscr{P}_\lambda})^{(h)}=0$\, for\, $h=0,\ldots,k-1$\, and\,  $l=1,\ldots,2r+1$. 
If $l=i_0+1$, we remove the column and row we operated on and  we obtain $(A_{\mathscr{P}_\lambda})_l=(A_\mathscr{Q})_l$ and, therefore, $\pf_l(A_{\mathscr{P}_\lambda})=\pf_l(A_\mathscr{Q})$. 
Let now\, $l=i_0$,\, and  $B$ be the matrix obtained by $A_\mathscr{Q}$ by removing the $(i_0+1)$th row and column and multiplying the $i_0$th row and column by a non-zero factor $\lambda/t^s$;\, in this case  we have that\, $\pf_l\left(A_{\mathscr{P}_\lambda}\right)\, =\, \pf_l\left(A_\mathscr{Q}\right) + \pf( B)\, =\, \pf_l\left(A_\mathscr{Q}\right) +\lambda/t^s\pf_{l+1}\left(A_\mathscr{Q}\right)$.  Finally, if $l\neq i_0,\,i_0+1$ then $(A_{\mathscr{P}_\lambda})_l= (E\, A_\mathscr{Q}\, E^{\rm t})_l= E_l\, (A_\mathscr{Q})_l\, (E_l)^{\rm t}$ and, thus,\,   $\pf_l(A_{\mathscr{P}_\lambda})=\pf_l(A_\mathscr{Q})$. Now, working modulo $t^k$, since by hypothesis  $\pf_l(A_\mathscr{Q})^{(h)}=0$\, for\, every $l$ and for  $h=0,\ldots,k-1$, the conclusion is straightforward.

\end{proof}

The next result generalizes what it happens in the classical case: if $k=1$ and $n=2r+1$, then the pfaffian ideal has codimension $3$.

\noindent
\begin{theorem}\label{main2} 
  Any pfaffian jet  variety $\mathcal{P}^{2r+1,k}_r$ is irreducible of codimension $3k$.
\end{theorem}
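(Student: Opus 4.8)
The plan is to reduce everything to the "generic" stratum $\mathcal{Z}$ and then induct on $r$ using the reduction of Theorem~\ref{redusion}. The starting point is the decomposition $\mathcal{P}_r^{2r+1,k}=\mathcal{Y}\cup\mathcal{Z}$ recorded in Remark~\ref{sembrablabla}. Since we are in the odd case $n=2r+1$, Lemma~\ref{smanettoso}(b) gives the inclusion $\mathcal{Y}\subseteq\mathcal{Z}$, and therefore $\mathcal{P}_r^{2r+1,k}=\mathcal{Z}$. Thus it suffices to prove that $\mathcal{Z}$ alone is irreducible of codimension $3k$: the degenerate locus $\mathcal{Y}$, where all the $x_{ij}^{(0)}$ vanish, is entirely absorbed and contributes no new irreducible component. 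Note that $\mathcal{Z}$ is nonempty, so its codimension is well defined.

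For the base of the induction I would take $r=1$, $n=3$. Here the $2$-pfaffians of $X(t)$ are simply its entries $x_{ij}(t)$, so $I^{3,k}_1$ is generated by the coefficients $x_{ij}^{(h)}$ with $1\le i<j\le 3$ and $0\le h\le k-1$, that is, by $3k$ distinct variables. Consequently $\mathcal{P}_1^{3,k}$ is a coordinate linear subspace of $\mathbb{A}^{3k}$, hence irreducible of codimension $3k$, as required.

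For the inductive step I would assume that $\mathcal{P}_{r-1}^{2r-1,k}$ is irreducible of codimension $3k$; observe that $2(r-1)+1=2r-1$, so the inductive hypothesis is again of the form $n=2r'+1$, and the induction is internally consistent. By Remark~\ref{sembrablabla}, the irreducible components of $\mathcal{Z}$ are precisely the $\mathbb{V}(\mathfrak{p})$ with $\mathfrak{p}$ a minimal prime of $I^{2r+1,k}_r$ not containing $x^{(0)}_{n-1,n}$, and Theorem~\ref{redusion} puts these in codimension-preserving bijection with the minimal primes of $I^{2r-1,k}_{r-1}$, i.e.\ with the irreducible components of $\mathcal{P}_{r-1}^{2r-1,k}$. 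Since the latter variety is irreducible by hypothesis, this set of primes $\mathfrak{p}$ is a singleton, whence $\mathcal{Z}$ is irreducible; and since the bijection preserves codimension, $\codim\mathcal{Z}=\codim\mathcal{P}_{r-1}^{2r-1,k}=3k$. Combining this with $\mathcal{P}_r^{2r+1,k}=\mathcal{Z}$ finishes the argument.

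The genuine content of the statement has already been extracted into the two preceding results, so inside this proof there is no serious obstacle; the crux is Lemma~\ref{smanettoso}(b), the collapse $\mathcal{Y}\subseteq\mathcal{Z}$ that is special to the odd case $n=2r+1$ and that prevents the degenerate stratum from creating extra components. The one point to watch is that the codimension count stays uniformly equal to $3k$ and does not grow with $r$; this is exactly guaranteed by the codimension-preserving clause of the bijection in Remark~\ref{sembrablabla}, anchored at the correct base value $3k$ for $r=1$.
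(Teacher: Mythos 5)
Your proposal is correct and follows essentially the same route as the paper's proof: induction on $r$ anchored at the base case $n=3$, the codimension-preserving bijection of Theorem~\ref{redusion} for the inductive step on the stratum $\mathcal{Z}$, and Lemma~\ref{smanettoso}(b) via Remark~\ref{sembrablabla} to absorb the degenerate stratum $\mathcal{Y}$ into $\mathcal{Z}$. The only (harmless) difference is one of presentation: you collapse $\mathcal{Y}\subseteq\mathcal{Z}$ first and then prove $\mathcal{Z}$ irreducible, whereas the paper first deduces primeness of the radical from the inductive hypothesis and then invokes the same inclusion, and you make the codimension bookkeeping slightly more explicit.
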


\begin{proof}
The proof runs by induction on $r$. When $r=1$ and $n=3$, the ideal $I^{3,k}_1$ is generated by $\{x^{(h)}_{12}, x^{(h)}_{13}, x^{(h)}_{23} \: \,h=0,\ldots, k-1 \}$ and $\mathcal{P}^{3,k}_1=\{O\}\subset \AA^{3k}$, which is irreducible of the right codimension.  Let us now suppose that $r>1$ and that the varieties $\mathcal{P}^{2s-1,k}_{s-1}$, for $s\leq r$ and all $k$, are irreducible of codimension 3k. Thus, the radical of $I^{2r-1,k}_{r-1}$ is prime and, by Theorem \ref{redusion}, also the radical of $I^{2r+1,k}_r$ is a prime ideal, that does not contain $x_{n-1,n}^{(0)}$. By Remark \ref{sembrablabla}, it is now sufficient to prove that $\mathcal{Y}\subseteq \mathcal{Z}$, but that is yielded by Lemma \ref{smanettoso}.
\end{proof}

\begin{example}
  Consider $I^{5,3}_2$, the jet  pfaffian ideal of the ideal generated by the 4 pfaffians of a generic $5\times 5$ skew symmetric matrix $X(t)$ of polynomials of degree 2 in $t$;\, $I^{5,3}_2$ is generated by 15 polynomials of degree r=2. With Macaulay2 we can verify that $\codim I^{5,3}_2=9$, as implied by the previous result, and that $I^{5,3}_2$ is prime.  Moreover, the Hilbert series of $R^{5,3}_2$ is
 {\footnotesize $$H(R^{5,3}_2, z)=\frac{1+9z+30z^2+45z^3+30z^4+9z^5+z^6}{(1-z)^{21}},$$}
  and {\footnotesize $$H(R^{5,1}_2, z)=\frac{1+3z+z^2}{(1-z)^7}.$$}
  Thus,\,\, $H(R^{5,3}_2, z)=(H(R^{5,1}_2, z))^3$.

  \noindent
Some computational evidence suggests that the given set of generators is not a Gr\"obner basis with respect to monomial orders which are in a natural way mutated from those which are classically considered in the literature for ideals of Pfaffians. 
\end{example}

The corresponding classical pfaffian ideals in this case  are even prime; we expect the same kind of behaviour for the general jet  pfaffian ideal $I^{2r+1,k}_r$.

\begin{conj}\label{quno}
For all positive integers $r$ and $k$,\, the jet pfaffian ideals  $I^{2r+1,k}_r$ are prime.
\end{conj}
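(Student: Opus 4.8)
The plan is to induct on $r$, paralleling the proof of Theorem \ref{main2}. The base case $r=1$ is immediate, since $R^{3,k}_1\simeq K$ is a field. For the inductive step I would assume that $I^{2r-1,k}_{r-1}$ is prime for all $k$, so that $R^{2r-1,k}_{r-1}$, and hence the polynomial ring $R^{2r-1,k}_{r-1}[\Psi]$, is a domain; by Theorem \ref{redusion} the localization $R^{2r+1,k}_r[(x^{(0)}_{n-1,n})^{-1}]$ is then a domain as well. Thus the entire substance of the conjecture lies in the passage from this localization to $R^{2r+1,k}_r$ itself, namely in the assertion that $x^{(0)}_{n-1,n}$ is a non-zerodivisor on $R^{2r+1,k}_r$: granted this, the localization map is injective, $R^{2r+1,k}_r$ embeds into a domain, and is therefore a domain.

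Before attacking the crux I would record what is already available for free. By Theorem \ref{main2} the ideal $I^{2r+1,k}_r$ has a unique minimal prime $\goth p$, and by Remark \ref{sembrablabla} together with Lemma \ref{smanettoso} (which gives $\mathcal Y\subseteq\mathcal Z=\ov{\mathcal U_{n-1,n}}$) one has $x^{(0)}_{n-1,n}\notin\goth p$. Consequently $R^{2r+1,k}_{r,\goth p}$ is a localization of the domain $R^{2r+1,k}_r[(x^{(0)}_{n-1,n})^{-1}]$ at its zero ideal, hence a field; in other words $R^{2r+1,k}_r$ satisfies Serre's condition $(R_0)$ with no extra work. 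Since a Noetherian ring is reduced exactly when it satisfies $(R_0)$ and $(S_1)$, and since an irreducible reduced ring is a domain, the conjecture is equivalent to the single assertion that $R^{2r+1,k}_r$ satisfies $(S_1)$, i.e. that $I^{2r+1,k}_r$ has no embedded primes. Equivalently, and this is the formulation that feeds directly into the induction, one must rule out that $x^{(0)}_{n-1,n}$ lies in some embedded associated prime.

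The hard part will be precisely this absence of embedded primes, and I do not expect it to yield to a soft argument. In the maximal case $n=2r$ (Theorem \ref{main1}) the regularity of $x^{(0)}_{n-1,n}$ came for free from the reverse-lexicographic Gröbner basis: there $x^{(0)}_{n-1,n}$ is the smallest variable, and revlex transports non-zerodivisors from $A^{n,k}/\ini_\prec I$ to $R^{n,k}_r$. That route is blocked here, since, as the Example following Theorem \ref{main2} records, the natural generators of $I^{2r+1,k}_r$ do not form a Gröbner basis for any of the orders mutated from the classical pfaffian ones, and no Gröbner basis is presently known. The most promising concrete plan is therefore to prove directly that $R^{2r+1,k}_r$ is Cohen--Macaulay: Cohen--Macaulayness forces $\,{\rm Ass}={\rm Min}=\{\goth p\}$, whence $x^{(0)}_{n-1,n}$ is automatically a non-zerodivisor and the induction closes. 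One way to reach this would be to exhibit, for some term order, a squarefree (hence radical) initial ideal whose Stanley--Reisner complex is Cohen--Macaulay, or to endow $R^{2r+1,k}_r$ with an algebra-with-straightening-law structure in the spirit of \cite{DeGo}, \cite{DeSb}; either would deliver reducedness and Cohen--Macaulayness at once.

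A corroborating intermediate target, suggested by the Example and by the product formula of Remark \ref{aldo}, is the Hilbert-series identity $H(R^{2r+1,k}_r,z)=H(R^{2r+1,1}_r,z)^k$, which one might hope to establish through a flat degeneration of $R^{2r+1,k}_r$. Combined with Cohen--Macaulayness of the special fibre, such an identity would pin down the multiplicity and the Hilbert function, and would exclude the appearance of any lower-dimensional embedded component, thereby settling $(S_1)$ and, with the $(R_0)$ observed above and the irreducibility of Theorem \ref{main2}, the conjecture itself.
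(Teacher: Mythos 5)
You have not proved the statement, and indeed you could not have compared favourably here: Conjecture~\ref{quno} is left \emph{open} in the paper, so there is no proof of it in the source to match. What the paper does establish is exactly the input you use: Theorem~\ref{main2} shows $\sqrt{I^{2r+1,k}_r}$ is prime (via Theorem~\ref{redusion}, Remark~\ref{sembrablabla} and Lemma~\ref{smanettoso}), i.e.\ irreducibility of $\mathcal{P}^{2r+1,k}_r$, and nothing more. Your preliminary reductions are correct and cleanly argued: the base case $R^{3,k}_1\simeq K$; the inductive step via Theorem~\ref{redusion} showing $R^{2r+1,k}_r[(x^{(0)}_{n-1,n})^{-1}]$ is a domain; the observation that, since the unique minimal prime $\goth p$ does not contain $x^{(0)}_{n-1,n}$, the local ring $R^{2r+1,k}_{r,\goth p}$ is the fraction field of that localization, so $(R_0)$ holds; and hence that the conjecture is equivalent to $(S_1)$, i.e.\ to $x^{(0)}_{n-1,n}$ avoiding every associated prime. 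This is a sensible sharpening of the problem, but it is a reformulation, not progress on the crux.

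The genuine gap is that the crux is never attacked: no argument, even in outline with a plausible mechanism, is given for the absence of embedded primes. Worse, the two concrete routes you propose are themselves the paper's own open problems: Cohen--Macaulayness of $R^{2r+1,k}_r$ is precisely Question~\ref{cm}, and the Hilbert-series identity $H(R^{2r+1,k}_r,z)=H(R^{2r+1,1}_r,z)^k$ is precisely Question~\ref{qdue}; invoking either would be circular in the sense of resting the conjecture on statements the authors explicitly could not settle (and which, as the example with $I^{6,2}_2$ shows, fail as soon as $n\geq 2r+2$, so any proof must exploit the maximality $n=2r+1$ in an essential way). The Gr\"obner route is likewise blocked by the paper's own computational evidence that the natural generators are not a Gr\"obner basis for the orders mutated from the classical ones --- a point you correctly acknowledge. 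In short: your proposal is an honest and correct reduction of Conjecture~\ref{quno} to the $(S_1)$ condition, accompanied by a research plan whose load-bearing steps are all unproven; as a proof it has a hole exactly where the difficulty lies.
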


\noindent

Considering the size of the computations involved, an affirmative answer to the following would provide a great simplification for the calculation of the Hilbert series of such rings.

\begin{question}\label{qdue}
  Do Formulas \eqref{oone} and \eqref{ttwo} hold true for $R^{2r+1,k}_r$, for all positive integers $r$ and $k$?
\end{question}

\begin{question}\label{cm}
Are $\mathcal{P}^{2r+1,k}_r$ Cohen Macaulay, for all positive integers $r$ and $k$?
\end{question}

\section{The reducible cases}

We now want to prove that the pfaffian jet  ideals are reducible except for the cases considered in the previous section.

\noindent
\framebox{$n\geq 2r+2$} The main result of the section is the following theorem.
 \begin{theorem}
\label{reducibell}
The pfaffian jet variety $\mathcal{P}^{n,k}_{r}$ is reducible 
for every $n\geq 2r+2$, with $r\geq 2$. Moreover, $\mathcal{P}^{n,k}_{r}$  is not pure; in particular, it is not Cohen Macaulay. 
\end{theorem}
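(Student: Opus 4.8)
The plan is to deduce all three assertions from the single statement that $\mathcal P^{n,k}_{r}$ is \emph{not pure}, i.e. that it has irreducible components of two different dimensions. This suffices because the generators $p^{(h)}$ of $I^{n,k}_{r}$ are homogeneous of degree $r$ in the standard grading of $A^{n,k}$ (each $x^{(h)}_{ij}$ having degree one), so $R^{n,k}_{r}$ is a graded algebra over $K$ and is therefore Cohen--Macaulay only if it is equidimensional; and a non-pure variety is a fortiori reducible. I will work throughout with the decomposition $\mathcal P^{n,k}_{r}=\mathcal Y\cup\mathcal Z$ of Remark \ref{sembrablabla}, using that the components of $\mathcal Z$ are precisely the minimal primes of $I^{n,k}_{r}$ not containing $({\bf x}^{(0)})$ and hence are genuine components of $\mathcal P^{n,k}_{r}$.

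The argument will run by induction on $r$. For the inductive step, suppose $r\ge 3$; then $n\ge 2r+2$ gives $n-2\ge 2(r-1)+2$, so the inductive hypothesis applies to $\mathcal P^{n-2,k}_{r-1}$ and provides two of its components of different codimension in $A^{n-2,k}$. By Theorem \ref{redusion} and Remark \ref{sembrablabla} there is a codimension-preserving bijection between the components of $\mathcal P^{n-2,k}_{r-1}$ and those of $\mathcal Z$; transporting the two components through it yields two components of $\mathcal Z$ of different codimension in $A^{n,k}$, hence of different dimension. As these are components of $\mathcal P^{n,k}_{r}$, the latter is not pure, and this step is essentially formal.

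Everything thus reduces to the base case $r=2$, where $\mathcal Z$ corresponds to $\mathcal P^{n-2,k}_{1}$, a single reduced point; hence $\mathcal Z$ is irreducible with $\dim\mathcal Z=k(2n-3)$, and the second, differently dimensioned component must be found inside $\mathcal Y$. I would first clear the range $n\ge 8$ by a dimension count: Lemma \ref{zione} gives $\mathcal Y\simeq\AA^{(k-1)n(n-1)/2}$ when $k\le r$ and $\mathcal Y\simeq\mathcal P^{n,k-2}_{2}\times\AA^{n(n-1)/2}$ when $k>r$, so in either case $\mathcal Y$ carries a component of dimension at least $(k-1)n(n-1)/2$, respectively $(k-2)(2n-3)+n(n-1)/2$. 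In both cases this exceeds $\dim\mathcal Z=k(2n-3)$ exactly when $n(n-1)/2>4n-6$, i.e. for $n\ge 8$; such a component cannot lie in the lower-dimensional $\mathcal Z$, so it is a component of $\mathcal P^{n,k}_{r}$ and non-purity follows at once.

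The hard part is the boundary regime $n\in\{6,7\}$, where $\dim\mathcal Y<\dim\mathcal Z$ and the easy comparison fails; here I must show by hand that $\mathcal Y\not\subseteq\mathcal Z$. I treat first $k\le r$ (so $k=2$), where $\mathcal Y=\mathbb V(({\bf x}^{(0)}))$, and exhibit an explicit point $\mathscr Q\in\mathcal Y\setminus\mathcal Z$: take the jet $X(t)=tM$ with $M$ a maximal-rank skew-symmetric matrix, which lies in $\mathcal Y$ since $X^{(0)}=0$ and each of its $2r$-pfaffians is divisible by $t^{r}\equiv 0\pmod{t^k}$. Writing any family in $\mathcal U$ approaching $\mathscr Q$ with leading term $X^{(0)}=\varepsilon N+\cdots$ and normalizing the direction $N$, membership $\mathscr Q\in\mathcal Z=\overline{\mathcal U}$ would force a nonzero skew-symmetric $N$ of rank at most $2r-2$ with $dq_S|_{N}(M)=0$ for every $2r$-subset $S$, where $q_S=\pf(X_S)$; since $\partial q_S/\partial x_{ab}=\pm\pf(X_{S\setminus\{a,b\}})$, this is an explicit linear system in the entries of $N$. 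I expect the crux of the whole proof to be the verification that, for a suitable $M$, this system has only the trivial solution $N=0$; I have checked this directly for $(n,r,k)=(6,2,2)$, where choosing $M$ the standard symplectic form the $\binom 64=15$ polarized pfaffians already force every entry of $N$ to vanish. Granting this, $\dim\mathcal Y=n(n-1)/2\ne 2(2n-3)=\dim\mathcal Z$ (the equation $n(n-1)/2=4n-6$ has no integer root) finishes the case $k=2$, and the remaining boundary cases $k>r$ should be reduced to it through the isomorphism $\mathcal Y\simeq\mathcal P^{n,k-2}_{2}\times\AA^{n(n-1)/2}$ of Lemma \ref{zione}, so that the same non-degeneracy statement remains the only essential input.
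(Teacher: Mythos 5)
Much of your outline coincides with the paper's own architecture: the induction on $r$ through the codimension-preserving bijection of Theorem \ref{redusion}, the decomposition $\mathcal{P}^{n,k}_2=\mathcal{Y}\cup\mathcal{Z}$ with $\mathcal{Z}$ irreducible of dimension $k(2n-3)$, and the dimension count disposing of $n\geq 8$ (the paper's Lemma \ref{cista?}(b) and Proposition \ref{per2decomponiamo}). Your treatment of the boundary case with $k=2$ is also essentially right: your witness $tM$ with $M$ the standard symplectic form is literally the paper's point \eqref{crux2} specialized to $k=2$, and your leading-term argument along a curve (rank of the leading coefficient $N$ at most $2$, plus the polarized conditions $B_S(N,M)=0$) is a correct linearization of the paper's obstruction. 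Two small repairs there: in characteristic $2$ the $15$ polarized equations do \emph{not} force $N=0$ (they allow $N_{12}=N_{34}=N_{56}\neq 0$), so you must also invoke $\pf_S(N)=0$, i.e.\ ${\rm rank}\,N\leq 2$, to finish, since the paper assumes only that $K$ is algebraically closed; and $n=7$ needs the padded matrix and its own (routine) check, which you did not carry out.

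The genuine gap is the boundary regime $n\in\{6,7\}$ with $k\geq 3$, which you defer to a reduction ``through the isomorphism $\mathcal{Y}\simeq\mathcal{P}^{n,k-2}_2\times\AA^{n(n-1)/2}$''; this does not go through as stated. Lemma \ref{zione} describes $\mathcal{Y}$ intrinsically, but it gives no control over $\mathcal{Z}\cap\mathcal{Y}$, i.e.\ over which points of $\mathcal{Y}$ arise as limits of points of $\mathcal{U}$, and there is no a priori reason why $\mathcal{Z}\cap\mathcal{Y}$ should correspond to the ``$\mathcal{Z}$-part'' of $\mathcal{P}^{n,k-2}_2$ times the affine factor; so non-containment at level $k-2$ does not formally transfer to level $k$. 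Nor can your first-order analysis be pushed up: for $k\geq 3$ the point $tM$ with $M$ of full rank is not even on $\mathcal{P}^{n,k}_2$ (its $4$-pfaffians are $t^2\,\pf_S(M)\not\equiv 0 \bmod t^k$), and for the correct witnesses the obstruction to lying in $\ov{\mathcal{U}}$ is of higher order than the tangent-level system $B_S(N,M)=0$. The missing ingredient is exactly the paper's Lemma \ref{new}: at any point of $\mathcal{U}$ one can, modulo $t^k$, express the last column of a $6\times 6$ block through the others, and identity \eqref{crux} then shows that every $6$-pfaffian $q$ vanishes modulo $t^{2k}$ --- the ``hidden equations'' $q^{(h)}=0$, $h=k,\ldots,2k-1$, which persist on $\mathcal{Z}_0$; the explicit point \eqref{crux2}, with entries $t^{\ell}$, $t^{\ell+1}$ (or $t^{\ell}$, $t^{\ell}$) and $t^{k-1}$, lies on $\mathcal{Y}_s$ for all $s$ yet has $6$-pfaffian $t^{2k-1}$, violating them (Remark \ref{ypsilonesse}). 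This higher-order identity, not the non-degeneracy of the linearized system, is the essential input for all $k\geq 3$; without it (or a substitute), and without the accompanying component-dimension bookkeeping for $n=6,7$ (the paper's Lemma \ref{cista?}(a), where your observation that $n(n-1)/2=4n-6$ has no integer root is the relevant computation), the base case of your induction is incomplete.
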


 In order to prove the theorem, we want to use the reduction argument of Lemma \ref{redusion}; therefore we first focus on  the case of jet varieties of $4$-pfaffians, with $n\ge 6$.

\noindent
We start by describing a procedure to obtain the irreducible components of $\mathcal{P}^{n,k}_2$, with $n\geq 6$. 

Recall that, as discussed in Remark \ref{sembrablabla},\, we may write $\mathcal{P}^{n,k}_{2}=\mathcal{Y}_0\cup \mathcal{Z}_0$,\, where $\mathcal{Y}_0$ is the subvariety where $x_{ij}^{(0)}=0$ for all $i,j$, and $\mathcal{Z}_0$ is the closure of any of the open sets $\mathcal{U}_{ij}$ - see Lemma \ref{smanettoso}  {\em a}). In contrast to what happens in the maximal case, cf. Lemma \ref{smanettoso} {\em b}), we prove  next that there is not an inclusion between the two subvarieties.

\begin{lemma}\label{new}
 Let $r=2$, $n\ge 6$, and\, $\mathcal{P}^{n,k}_{2}=\mathcal{Y}_0\cup \mathcal{Z}_0$.\, Then, one has $\mathcal{Y}_0\not\subseteq \mathcal{Z}_0$.
\end{lemma}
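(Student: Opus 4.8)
The plan is to exhibit a single point $X_0\in\mathcal Y_0$ and to certify $X_0\notin\mathcal Z_0$ by a Zariski tangent space estimate, rather than by a rank invariant of the coordinates $X_0^{(h)}$: such invariants do not suffice, since for $k\ge 3$ one can degenerate families of decomposable jets so that a high-rank coefficient survives in the closure $\mathcal Z_0$. The crucial input is that, by Theorem \ref{redusion} applied with $r=2$ together with $\mathcal P^{n-2,k}_1=\{O\}$, the component $\mathcal Z_0=\ov{\mathcal U_{n-1,n}}$ is irreducible of dimension $k(2n-3)$ (the number of $\Psi$-variables). Hence, if $x\in\mathcal P^{n,k}_2$ satisfies $\dim_K T_x\mathcal P^{n,k}_2<k(2n-3)$, then $x\notin\mathcal Z_0$, because every point of the irreducible variety $\mathcal Z_0$ has tangent space of dimension at least $\dim\mathcal Z_0=k(2n-3)$.

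To compute tangent spaces I would identify the $4$-pfaffians of $X(t)$ with the coordinates of $X(t)\wedge X(t)$, so that $\mathcal P^{n,k}_2=\{X:X\wedge X\equiv 0\ (\mathrm{mod}\ t^k)\}$; the generators $p^{(h)}$ of $I^{n,k}_2$ then give, at a point $X_0$, the inclusion $T_{X_0}\mathcal P^{n,k}_2\subseteq\ker\bigl(V\mapsto X_0\wedge V\bmod t^k\bigr)$, a $K$-linear map $\wedge^2(K[t]/t^k)^n\to\wedge^4(K[t]/t^k)^n$. I would take $X_0=x\,t+M\,t^{k-1}$ with $x$ a $2$-form of rank $2$ and $M$ of maximal rank; a direct check that the coefficients of $X_0\wedge X_0$ in degrees $\le k-1$ vanish (only the degree-$2$ term $x\wedge x$ occurs, and it is $0$ as $x$ has rank $2$) shows $X_0\in\mathcal Y_0$. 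Writing $L=x\wedge(-)$ and $N=M\wedge(-)$ and splitting $X_0\wedge V$ by $t$-degree, the graded pieces are $LV^{(0)},\dots,LV^{(k-3)}$ in degrees $1,\dots,k-2$ and $LV^{(k-2)}+NV^{(0)}$ in degree $k-1$; a rank count gives $\dim_K\ker(X_0\wedge -)=k(2n-3)-\bigl(\binom n2-2\binom{n-2}2\bigr)$.

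Since $\binom n2-2\binom{n-2}2>0$ exactly for $n\le 7$, for $n\in\{6,7\}$ one gets $\dim T_{X_0}\mathcal P^{n,k}_2<k(2n-3)=\dim\mathcal Z_0$, whence $X_0\notin\mathcal Z_0$ and $\mathcal Y_0\not\subseteq\mathcal Z_0$. For $n\ge 8$ the tangent estimate goes the wrong way, but a cruder count finishes: by Lemma \ref{zione} one has $\mathcal Y_0\supseteq\mathcal P^{n,k-2}_2\times\AA^{n(n-1)/2}$, so $\dim\mathcal Y_0\ge(k-2)(2n-3)+\binom n2>k(2n-3)=\dim\mathcal Z_0$ for $n\ge 8$, and a component of $\mathcal Y_0$ cannot lie in the irreducible $\mathcal Z_0$. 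For $k=2$ the point degenerates to $X_0=tM$ and the same count applies, recovering the transparent fact that $\mathrm{rank}(X_0^{(1)})\le 4$ holds on all of $\mathcal Z_0$.

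The main obstacle is the rank count of the second step, i.e. showing that $\dim\ker(X_0\wedge-)$ is as small as claimed. This amounts to proving that, for a suitable maximal-rank $M$, the induced map $\ker L\to\wedge^4(K^n)/\Im L$, $V\mapsto M\wedge V\bmod \Im L$, has sufficiently small (ideally trivial) kernel; this nondegeneracy is a genuine genericity statement about $M$, closely related to a hard-Lefschetz-type injectivity of $M\wedge(-)$ on $2$-forms, and it is what pins down the tangent space dimension and makes the whole argument go through.
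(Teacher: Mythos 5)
Your frame is sound in several places: $\mathcal{Z}_0$ is indeed irreducible of dimension $k(2n-3)$ by Theorem \ref{redusion} together with $\mathcal{P}^{n-2,k}_1=\{O\}$; the tangent-space criterion is valid; your witness $X_0=xt+Mt^{k-1}$ does lie in $\mathcal{Y}_0\cap\mathcal{P}^{n,k}_2$; and the fallback $\dim\mathcal{Y}_0>\dim\mathcal{Z}_0$ for $n\geq 8$ is correct (it is the inequality $n^2-9n+12>0$, as in Lemma \ref{cista?}). The gap is in the rank count, and it is fatal exactly where you need it. Writing $L=x\wedge(-)$, $N=M\wedge(-)$, your kernel conditions are $V^{(0)},\dots,V^{(k-3)}\in\ker L$ and $LV^{(k-2)}+NV^{(0)}=0$, so that $\dim_K\ker(X_0\wedge -)=\binom{n}{2}+(k-2)(2n-3)+\dim W$, where $W=\{v\in\ker L:\,Nv\in\Im L\}$; your claimed formula $k(2n-3)-\left(\binom{n}{2}-2\binom{n-2}{2}\right)$ is precisely the case $\dim W=0$. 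But $W$ can \emph{never} be zero when $k\geq 3$: one has $x\in\ker L$ and $Nx=M\wedge x=x\wedge M\in\Im L$ trivially, so $x\in W$ and $\dim W\geq 1$ — the hard-Lefschetz-type injectivity you hope for cannot hold on all of $\ker L$. Hence the optimal count is $k(2n-3)-\left(\binom{n}{2}-2\binom{n-2}{2}\right)+1$. At $n=6$ the margin $\binom{6}{2}-2\binom{4}{2}=3$ leaves room, \emph{provided} you prove $\dim W\leq 2$ for some $M$ (true — for $M$ whose restriction $M'$ to a complement of $\langle e_1,e_2\rangle$ has rank at least $4$ one gets $W=\langle x\rangle$ — but this is exactly the step you flag as unproven). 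At $n=7$ the margin is $\binom{7}{2}-2\binom{5}{2}=1$, so even the optimal $\dim W=1$ yields $\dim\ker=k(2n-3)=\dim\mathcal{Z}_0$, an equality perfectly compatible with $X_0$ being a smooth point of $\mathcal{Z}_0$. Your argument therefore proves nothing for $n=7$, $k\geq 3$, and $n=7$ is not covered by the $n\geq 8$ dimension count either. (For $k=2$ your degenerate witness and count are fine.)

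The paper's proof sidesteps all genericity and tangent issues by producing a \emph{closed equation} valid on $\mathcal{Z}_0$ but violated on $\mathcal{Y}_0$: on $\mathcal{U}_{ij}$, a column reduction of a $6\times 6$ submatrix modulo $t^k$ shows that every $6$-pfaffian $q$ satisfies the extra conditions $q^{(h)}=0$ for $h=k,\dots,2k-1$, which then pass to the closure $\mathcal{Z}_0$; an explicit monomial point of $\mathcal{Y}_0$ whose $6$-pfaffian equals $t^{2k-1}$ finishes the proof, uniformly in $n\geq 6$ and $k\geq 2$. To salvage your route you would need both to supply the missing nondegeneracy statement at $n=6$ and to find a genuinely different witness at $n=7$ — for the natural candidates (including the paper's monomial point) the same obstruction $x\in W$ reproduces the equality $\dim\ker=11k$ — or else replace the infinitesimal estimate by an honest equation on $\mathcal{Z}_0$, which is what the paper does.
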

\begin{proof} Let $\mathscr{Q}$ be a point which is in $\mathcal{U}_{ij}\subset \mathcal{Z}_0$ for some $i,j$, and 
let\, $A_\mathscr{Q}=(q_{ij}(t))$,\, with\,  $q_{ij}(t)=q_{i, j}^{(0)}+q_{i, j}^{(1)}t+ \cdots + q_{i, j}^{(k-1)}t^{k-1}$,\, be the matrix associated with $\mathscr{Q}$.
Since  $\mathscr{Q}$ belongs to $\mathcal{P}^{n,k}_{2}$, for every  $4$-pfaffian $p$ of $A_\mathscr{Q}$ one has $p^{(h)}=0$, for all $h=0,\ldots,k-1$.
Let $q$ be any $6$ pfaffian of $A_\mathscr{Q}$. For what we are going to say next, we may assume without loss of generality  that $Q$ is the 
submatrix of  the first $6$ rows and columns of $A_\mathscr{Q}$, and that $q=\pf(Q)$.
Since by Laplacian expansion  $q$ can be expressed in terms of $4$-pfaffians, we have that $q^{(h)}=0$, for $h=0,\ldots,k-1$ and, therefore, $q=0$ modulo $t^k$. Thus also $\det Q=0$ modulo $t^k$. Again without loss of generality, we may assume $(i,j)=(5,6)$ and that, working modulo $t^k$, we may express the last column of $Q$ as a linear combination of the previous ones. Thus, there exist
  polynomials $\lambda_h(t)$, $h=1,\ldots,5$ of degree at most $k-1$ such that $q_{i6}(t)=\sum_{h=1}^5 \lambda_h(t)q_{ih}(t) \mod t^k$ for $i=1,\ldots,5$, i.e. there exists polynomials $\mu_i$ such that  $q_{i6}(t)=\sum_{h=1}^5 \lambda_h(t)q_{ih}(t) + t^k\mu_i$ for $i=1,\ldots,5$. Consider now the $6\times 6$ skew-symmetric matrices
  $Y=(y_{ij})$, $Z=(z_{ij})$, defined by
{\footnotesize  $$y_{ij}=\left\{\begin{array}{ll} q_{ij}(t) &\text{ if }   1\leq i <   j \leq 5,\\[+.3cm]

  \smallskip
  \sum_{h=1}^5\lambda_h(t)q_{ih}(t) & \text{ if }
  1\leq i < j=6;
  \end{array}
  \right.
  \,\,\,\,\,
  z_{ij}=\left\{\begin{array}{ll} q_{ij}(t) &\text{ if }   1\leq i < j \leq 5,\\[+.3cm]
  t^k\mu_i & \text{ if } 1\leq i < j=6.
  \end{array}
  \right.
  $$
  }
  Thus,\, $\pf (Y)=0$\,  and\,
  \begin{equation}\label{crux}
    q=\pf (X)=\pf (Y) + \pf (Z)= \sum_{h=1}^5 \pm t^k\mu_h \pf_{h6} Z = t^k\sum_{h=1}^5 \pm \mu_h \pf_{h6} Q.
  \end{equation}
  From the above, we derive that a point belongs to the (non-empty) open subset $\mathcal{U}_{56}\subset \mathcal{Z}$  verifies the extra conditions $$q^{(h)}=0\,\,\,\, \text{ for }\,\,\,\, h=k,\ldots 2k-1.$$ On the other hand, consider the point $\mathscr{P}$ determined by the associated matrix
  \begin{equation}\label{crux2}
    {\footnotesize A_\mathscr{P}=\begin{pmatrix} 0& t^\ell&0 &0 &0 &0 \\
 &0 &0 &0 &0 &0 \\
 & &0 &t^{\ell+1} &0 &0 \\
 & & &0 &0 &0 \\
 & & & &0 & t^{k-1}\\
 & & & & &0 \\
 \end{pmatrix}},\text{ if } k=2\ell+1\,\,\,\text{ or }\,\,\,
{\footnotesize A_\mathscr{P}=\begin{pmatrix} 0& t^\ell&0 &0 &0 &0 \\
 &0 &0 &0 &0 &0 \\
 & &0 &t^\ell &0 &0 \\
 & & &0 &0 &0 \\
 & & & &0 & t^{k-1}\\
 & & & & &0 \\
 \end{pmatrix}},
  \end{equation}

\noindent if $k=2l$; in both cases $\pf(A_\mathscr{P})=t^{2k-1}$, and it is not difficult to see that   $\mathscr{P}\in \mathcal{Y}_0\setminus \mathcal{Z}_0$\,  for\, $s=0,\ldots,\ell-1$.
 \end{proof}

\begin{proposition}\label{4reducible}
  The jet  variety $\mathcal{P}^{n,k}_2$ of a pfaffian ideal generated by $4$-pfaffians is reducible for all $n\geq 6$.

\end{proposition}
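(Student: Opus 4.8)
The plan is to read off reducibility directly from the decomposition recorded in Remark \ref{sembrablabla}, namely $\mathcal{P}^{n,k}_2=\mathcal{Y}_0\cup\mathcal{Z}_0$, and to argue that in the range $r=2$, $n\ge 6$ this exhibits the variety as a union of two proper closed subsets, neither of which is contained in the other. A variety admitting such a covering cannot be irreducible, so this is exactly what we need.

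First I would fix the two pieces. Both are closed and nonempty: $\mathcal{Z}_0=\overline{\mathcal{U}_{ij}}$ is the Zariski closure of the open locus where $x_{ij}^{(0)}\neq 0$ by Lemma \ref{smanettoso}(a), while $\mathcal{Y}_0=\mathcal{P}^{n,k}_2\cap\mathbb{V}(({\bf x}^{(0)}))$ contains at least the origin. From Remark \ref{sembrablabla} we already know one of the two non-inclusions, namely $\mathcal{Z}_0\not\subseteq\mathcal{Y}_0$, since by construction $\mathcal{Z}_0$ meets the locus where some $x_{ij}^{(0)}$ is nonzero whereas every point of $\mathcal{Y}_0$ has all $x_{ij}^{(0)}=0$. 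The reverse non-inclusion $\mathcal{Y}_0\not\subseteq\mathcal{Z}_0$ is precisely the content of Lemma \ref{new}: it produces, via the explicit matrices of \eqref{crux2}, a point $\mathscr{P}\in\mathcal{Y}_0$ whose pfaffian fails the extra vanishing conditions $q^{(h)}=0$ for $h=k,\ldots,2k-1$ established in \eqref{crux}, conditions that every point of $\mathcal{U}_{56}\subseteq\mathcal{Z}_0$ must satisfy. Hence $\mathscr{P}\notin\mathcal{Z}_0$.

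Finally I would conclude formally. Were $\mathcal{P}^{n,k}_2$ irreducible, then from the covering $\mathcal{P}^{n,k}_2=\mathcal{Y}_0\cup\mathcal{Z}_0$ by two closed sets one of them would have to coincide with the whole variety, forcing either $\mathcal{Y}_0\subseteq\mathcal{Z}_0$ or $\mathcal{Z}_0\subseteq\mathcal{Y}_0$; both have just been excluded. Therefore $\mathcal{P}^{n,k}_2$ is reducible for all $n\ge 6$, as claimed. Since Lemma \ref{new} has already performed the genuinely delicate analytic step of separating the two components, the remaining argument is purely topological; the only points requiring care are checking that $\mathcal{Y}_0$ and $\mathcal{Z}_0$ are both proper (which follows from the two strict non-inclusions) and that together they cover the variety (which is Remark \ref{sembrablabla}). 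I expect no real obstacle beyond invoking Lemma \ref{new} correctly.
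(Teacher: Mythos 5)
Your proposal is correct and takes essentially the same route as the paper: the paper's one-line proof likewise deduces reducibility from the decomposition $\mathcal{P}^{n,k}_2=\mathcal{Y}_0\cup\mathcal{Z}_0$ of Remark \ref{sembrablabla} combined with Lemma \ref{new}. You merely make explicit the topological step (properness of both pieces via the two non-inclusions, and that an irreducible variety cannot be a union of two proper closed subsets), which the paper leaves implicit.
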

\begin{proof} 
The previous lemma implies that  $\mathcal{P}^{n,k}_2=\mathcal{Y}_0\cup\mathcal{Z}_0$ yields a decomposition of our variety as a union of two proper subvarieties.
\end{proof}

Now we go on decomposing, in order to find  the irriducible components of $\mathcal{P}^{n,k}_{2}$. 
By Lemma \ref{zione}, if\, $k=2$\, then\, $\mathcal{Y}_0\simeq \AA^{\frac{n(n-1)}{2}}$,\, and 
 if $k=3$,  then\, $\mathcal{Y}_0\simeq\mathcal{P}^{n,1}_{2}\times \AA^{\frac{n(n-1)}{2}}$\, which is irreducible since\, $\mathcal{P}^{n,1}_{2}$\, is a classical pfaffian variety - see  Proposition \ref{classpfaff}; thus, if $k=2, 3$, then $\mathcal{Y}_0$ is irreducible.  \newline
Let now  $k\geq 4$, so that\, $\mathcal{Y}_0\simeq \mathcal{P}^{n,k-2}_{2}\times \AA^{\frac{n(n-1)}{2}}$;\, let   $\mathcal{Y}_1$ be the subvariety of $\mathcal{Y}_0$ of the points with\, $x_{ij}^{(1)}=0$\, for all $i,j$; 
we let 
\,$\mathcal{Z}_1=\mathcal{T}_1\times \AA^{\frac{n(n-1)}{2}}$,\, where $\mathcal{T}_1$ is isomorphic to the subvariety of $\mathcal{P}^{n,k-2}_{2}$ which is the closure of the open set where some\, $x_{ij}^{(1)}\not= 0$.\, Roughly speaking, $\mathcal{T}_1$ is the ``$\mathcal{Z}$'' subvariety of $\mathcal{P}^{n,k-2}_{2}$. By Lemma \ref{zione},
$\mathcal{Y}_1\simeq \left\{ \begin{array}{lcl}
  \AA^{n(n-1)} &\text{ if } &k=4\\
  \mathcal{P}_2^{n, k-4} \times \AA^{n(n-1)} &\text{ if } &k>4
\end{array}
\right.$
\noindent
and, if  $k=4$, then $\mathcal{Y}_1$ is irreducible. 

\noindent
By repeating this procedure $\ell-1$ times, where $\ell=\lfloor k/2 \rfloor$, we obtain subvarieties  $\mathcal{Y}_s$, for  $s=1,\ldots,\ell-1$,  defined by $x_{ij}^{(h)}=0$ for $h=0,\ldots, s$ and for all $i,j$. For\, $s\leq l-2$,\, we have \, $\mathcal{Y}_s\simeq         \mathcal{P}^{n,k-(2s+2)}_2\times\AA^{(s+1)\frac{n(n-1)}{2}} $\,
and \, $\mathcal{Z}_{s+1}=\mathcal{T}_{s+1}\times\AA^{(s+1)\frac{n(n-1)}{2}}$,\, where $\mathcal{T}_{s+1}$ is isomorphic to the subvariety of $\mathcal{P}^{n,k-(2s+2)}_2$ which is the closure of the open set where some\, $x_{ij}^{(s+1)}\not= 0$.\, Moreover, $\mathcal{Y}_{\ell-1}\simeq \left\{\begin{array}{lcl}
\AA^{\frac{\ell n(n-1)}{2}} &\text{ if } &k=2\ell;\\
\mathcal{P}^{n,1}_{2}\times \AA^{\frac{\ell n(n-1)}{2}} &\text{ if } &k=2\ell+1.
\end{array}
\right.$
This descends by the very definition of the jet equations given by $4$-pfaffians.
In fact, if $k=2\ell$, the defining equations of $\mathcal{Y}_{\ell-1}$ simply become  $x_{ij}^{(h)}=0$ for $h=0,\ldots,\ell-1$, and  the remaining variables are free;  if $k=2\ell+1$, the equations defining $\mathcal{Y}_{\ell-1}$ do not involve the variables $x_{ij}^{(h)}$ for $h=\ell+1,\ldots,k$, which are therefore free in the corresponding coordinate ring as in the previous case. Furthermore,  $\mathcal{Y}_{\ell-1}$ is defined by the equations of the classical pfaffian variety $\mathcal{P}^{n,1}_2$ expressed in terms of the variables $x_{ij}^{(l)}$. 

\begin{remark}\label{ypsilonesse}
  With the above notation, we can immediately generalize Lemma \ref{new}: for\, $s=0,\ldots,\ell-1$,\, one has $\mathcal{Y}_s\not\subseteq \mathcal{Z}_0$. To this end, one only needs to observe that the point $\mathscr{P}$ defined in \eqref{crux2} belongs to $\mathcal{Y}_s\setminus \mathcal{Z}_0$,\, for all $s=0,\ldots,\ell-1$.
\end{remark}

\noindent
As a first consequence of the above discussion, we obtain the following result.

\begin{proposition}\label{mainred}
  Let $\ell=\lfloor k/2\rfloor$. With the above notation,
  \begin{itemize}    
  \item[a)]  For all \,$s=0,\ldots,\ell-1$,\, the subvarieties\, $\mathcal{Z}_s\subseteq \mathcal{P}^{n,k}_2$\,  are irreducible of codimension\, $\frac{(k-2s)(n-2)(n-3)}{2}+s\frac{n(n-1)}{2}$.
    \medskip
  \item[b)] the subvariety $\mathcal{Y}_{\ell-1}$ is irreducible with
    $$\codim\mathcal{Y}_{\ell-1}=\left\{\begin{array}{lcl}
  \frac{\ell n(n-1)}{2}, &\text{ if }& k=2\ell;\\
  &&\\
  \frac{\ell n(n-1)}{2}+\frac{(n-2)(n-3)}{2}, &\text{ if }& k=2\ell+1.
  \end{array}\right.$$
\end{itemize}
\end{proposition}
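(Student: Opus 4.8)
The plan is to prove both parts simultaneously by induction on $s$ (equivalently, on the jet-depth $k-2s$), reducing everything to a single base case together with the product decomposition already assembled in the discussion preceding the statement; throughout, codimension is computed inside the ambient space $\AA^{kn(n-1)/2}$.

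First I would dispose of the base case $s=0$, which carries the real content. By Remark \ref{sembrablabla}, $\mathcal{Z}_0=\overline{\mathcal{U}_{n-1,n}}$ is the union of those irreducible components of $\mathcal{P}^{n,k}_2$ whose minimal primes avoid $x^{(0)}_{n-1,n}$, equivalently (by Corollary \ref{unoetutti}) avoid every $x^{(0)}_{ij}$. The codimension-preserving correspondence of Theorem \ref{redusion}, applied with $r=2$, puts these in bijection with ${\rm Min}(I^{n-2,k}_1)$. Now $I^{n-2,k}_1$ is generated by the $2$-pfaffians of the $(n-2)\times(n-2)$ matrix, i.e. by all of its jet variables $x^{(h)}_{ij}$, so it is the (prime) irrelevant maximal ideal of $A^{n-2,k}$; it therefore has a single minimal prime, of codimension equal to the number of those variables, namely $\frac{k(n-2)(n-3)}{2}$. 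Hence $\mathcal{Z}_0$ is irreducible, of exactly this codimension -- the $s=0$ instance of the claimed formula.

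For the inductive step I would invoke the decomposition recorded just above the statement: for $1\le s\le \ell-1$ one has $\mathcal{Z}_s\simeq \mathcal{T}_s\times \AA^{sn(n-1)/2}$, where $\mathcal{T}_s$ is isomorphic to the ``$\mathcal{Z}_0$'' subvariety of the smaller pfaffian jet variety $\mathcal{P}^{n,k-2s}_2$. Applying the base case to $\mathcal{P}^{n,k-2s}_2$ makes $\mathcal{T}_s$ irreducible of codimension $\frac{(k-2s)(n-2)(n-3)}{2}$ in $\AA^{(k-2s)n(n-1)/2}$, hence of dimension $(k-2s)(2n-3)$. Since a product of an irreducible variety with an affine space over the algebraically closed field $K$ is again irreducible, $\mathcal{Z}_s$ is irreducible; and $\dim \mathcal{Z}_s=(k-2s)(2n-3)+\frac{sn(n-1)}{2}$, so subtracting from $\dim \AA^{kn(n-1)/2}$ and using $\frac{n(n-1)}{2}-(2n-3)=\frac{(n-2)(n-3)}{2}$ returns the asserted codimension. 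This settles part a).

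Part b) then follows directly from Lemma \ref{zione}. When $k=2\ell$ one has $\mathcal{Y}_{\ell-1}\simeq \AA^{\ell n(n-1)/2}$, an affine space and hence irreducible; when $k=2\ell+1$ one has $\mathcal{Y}_{\ell-1}\simeq \mathcal{P}^{n,1}_2\times \AA^{\ell n(n-1)/2}$, which is irreducible because the classical pfaffian variety $\mathcal{P}^{n,1}_2$ is a domain by Proposition \ref{classpfaff}. The two codimension values drop out of $\dim \mathcal{P}^{n,1}_2=(r-1)(2n-2r+1)=2n-3$ together with the same identity as above. The one genuinely delicate step is the base case: one must verify, through Theorem \ref{redusion}, that the reduced target $\mathcal{P}^{n-2,k}_1$ is a single (reduced) point, so that $\mathcal{Z}_0$ has exactly one component, and then track the codimension faithfully across the localization and the correspondence. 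Everything after that is a formal induction plus dimension bookkeeping.
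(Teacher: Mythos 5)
Your proposal is correct and takes essentially the same approach as the paper: both arguments rest on the decomposition $\mathcal{Z}_s=\mathcal{T}_s\times\AA^{s\frac{n(n-1)}{2}}$ from the preceding discussion, the codimension-preserving correspondence of Theorem \ref{redusion} reducing $\mathcal{T}_s$ to $\mathcal{P}^{n-2,k-2s}_{1}$ (which is irreducible of maximal codimension, the ideal $I^{n-2,k-2s}_1$ being generated by all the variables), and, for part b), the identifications of $\mathcal{Y}_{\ell-1}$ via Lemma \ref{zione} together with the dimension formula of Proposition \ref{classpfaff}. Your packaging as ``base case $s=0$ plus induction'' is only cosmetic --- the paper applies the same reduction uniformly for each $s$ --- and your dimension bookkeeping matches the stated formulas.
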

\begin{proof} 
  Recall that, from the above discussion,  for all\, $s=0,\ldots,l-1$,\, one has\, $\mathcal{Z}_{s}=\mathcal{T}_{s}\times \AA^{s\frac{n(n-1)}{2}}$,\, where $\mathcal{T}_{s}$ is isomorphic to the subvariety of $\mathcal{P}^{n,k-2s}_{2}$ which is the closure of the open set where some\,  $x_{ij}^{(s)}\not= 0$.\, It follows from Theorem \ref{redusion} that the irreducible components of $\mathcal{T}_{s}$ are in correspondence with the components of\, $\mathcal{P}^{n-2,k-2s}_{1}$,\, and this correspondence preserves the codimensions. Thus, since  $\mathcal{P}^{n-2,k-2s}_{1}$ is irreducible of codimension\, $\frac{(k-2s)(n-2)(n-3)}{2}$,\, then also $\mathcal{Z}_{s}$ is irreducible and $\codim \mathcal{Z}_{s}=\frac{(k-2s)(n-2)(n-3)}{2}+s\frac{n(n-1)}{2}$,\, and this proves the first part of the statement. \newline
  For Part {\em b}), we have already determined $\mathcal{Y}_{l-1}$ and it is immediate to check that it is irreducible. Moreover, it is easy to compute its codimension: if $k=2\ell$, then\,  ${\rm codim\,} \mathcal{Y}_{\ell-1}=\frac{\ell n(n-1)}{2}$;\,  by  Proposition \ref{classpfaff}, the codimension of\, $\mathcal{P}^{n,1}_{2}$\, is\,  $\frac{n(n-1)}{2}-2n+3$\, and therefore, if\, $k=2\ell+1$,\, the codimension of  $\mathcal{Y}_{\ell-1}$ is 
\,$\frac{n(n-1)}{2}-2n+3+\frac{\ell n(n-1)}{2}\,=\,\frac{(n-2)(n-3)}{2}+\frac{\ell n(n-1)}{2}$.

\end{proof}

\begin{lemma}\label{cista?}
  Let $r=2$ and $n\geq 6$.
  \begin{itemize}
  \item[a)] If $n=6, 7$,\, then ${\rm codim\,} \mathcal{Z}_0< {\rm codim\,} \mathcal{Z}_1< \ldots < {\rm codim\,} \mathcal{Z}_{\ell-1}<{\rm codim\,} \mathcal{Y}_{\ell-1}$. \newline
    Moreover,
    $\mathcal{Z}_0$ is the irreducible component of $\mathcal{P}_2^{n,k}$ of smallest codimension and
    $\codim\mathcal{P}^{n,k}_2=\codim\mathcal{Z}_0$.
    \medskip
  \item[b)] If $n\geq 8$, then $\codim \mathcal{Y}_{\ell-1} <\codim \mathcal{Z}_{\ell-1}<\ldots<\codim \mathcal{Z}_{1}<\codim \mathcal{Z}_{0}$. \newline
    Moreover, $\mathcal{Y}_{\ell-1}$ is the irreducible component of $\mathcal{P}_2^{n,k}$ of smallest codimension and
    ${\rm codim\,}\mathcal{P}^{n,k}_2={\rm codim\,}\mathcal{Y}_{\ell-1}$.  
    
  \end{itemize}
\end{lemma}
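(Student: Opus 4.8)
The plan is to reduce both statements to a single sign computation. By Proposition \ref{mainred}, set $c(s):=\codim\mathcal{Z}_s=\frac{(k-2s)(n-2)(n-3)}{2}+s\frac{n(n-1)}{2}$ for $s=0,\ldots,\ell-1$. The consecutive differences are independent of $s$:
$$c(s+1)-c(s)=\frac{n(n-1)}{2}-(n-2)(n-3)=\frac{-n^2+9n-12}{2}=:\delta(n).$$
Since the roots of $n^2-9n+12$ are $\frac{9\pm\sqrt{33}}{2}$, one has $\delta(n)>0$ for $2\le n\le 7$ (in particular $\delta(6)=3$, $\delta(7)=1$) and $\delta(n)<0$ for $n\ge 8$ (e.g. $\delta(8)=-2$). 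Hence the chain $c(0),\ldots,c(\ell-1)$ is strictly increasing when $n=6,7$ and strictly decreasing when $n\ge 8$, which gives the inequalities among the $\mathcal{Z}_s$ in parts a) and b) respectively.

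Next I would compare $c(\ell-1)$ with $\codim\mathcal{Y}_{\ell-1}$, treating the two parities of $k$ separately. Using $k-2(\ell-1)=2$ when $k=2\ell$ and $k-2(\ell-1)=3$ when $k=2\ell+1$, together with the two codimension formulas of Proposition \ref{mainred} b), a direct computation shows that in both cases
$$\codim\mathcal{Y}_{\ell-1}-c(\ell-1)=\frac{n(n-1)}{2}-(n-2)(n-3)=\delta(n).$$
Thus $\mathcal{Y}_{\ell-1}$ extends the monotone chain in the same direction as the $\mathcal{Z}_s$: for $n=6,7$ it has the largest codimension, yielding $\codim\mathcal{Z}_0<\cdots<\codim\mathcal{Z}_{\ell-1}<\codim\mathcal{Y}_{\ell-1}$, and for $n\ge 8$ it has the smallest, yielding $\codim\mathcal{Y}_{\ell-1}<\codim\mathcal{Z}_{\ell-1}<\cdots<\codim\mathcal{Z}_0$. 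This proves the displayed chains.

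For the \emph{Moreover} assertions I would use the decomposition $\mathcal{P}^{n,k}_2=\mathcal{Z}_0\cup\cdots\cup\mathcal{Z}_{\ell-1}\cup\mathcal{Y}_{\ell-1}$ into irreducible pieces, obtained by iterating the splitting described in the paragraph preceding Proposition \ref{mainred}. The codimension of $\mathcal{P}^{n,k}_2$ is the minimum of the codimensions of its irreducible components; since each component, being irreducible, is contained in one of the covering pieces, this minimum coincides with the smallest codimension occurring among the pieces, which by the chains above is $\codim\mathcal{Z}_0$ in case a) and $\codim\mathcal{Y}_{\ell-1}$ in case b). Finally, the piece $\mathcal{W}$ of strictly smallest codimension cannot be contained in any other piece, since all others have strictly smaller dimension; as an irreducible set it lies in a component, which in turn lies in some piece, forcing that piece to equal $\mathcal{W}$. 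Hence $\mathcal{W}$ is itself an irreducible component of $\mathcal{P}^{n,k}_2$, namely the one of smallest codimension, as claimed.

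The only genuine content is the identity $c(s+1)-c(s)=\delta(n)$ and its twin for $\mathcal{Y}_{\ell-1}$; the rest is bookkeeping. I expect the only mild obstacle to be keeping the two parities of $k$ straight when verifying the second display, and confirming that $\delta(n)\ne 0$ throughout the relevant ranges so that all the inequalities are strict.
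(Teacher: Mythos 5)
Your proof is correct and takes essentially the same route as the paper: the paper's own proof consists precisely of the sign analysis of the difference $\codim\mathcal{Z}_{s+1}-\codim\mathcal{Z}_s=\frac{-n^2+9n-12}{2}$, which is negative exactly when $n^2-9n+12>0$, i.e.\ for $n\geq 8$, and positive for $n=6,7$. You additionally verify, in both parities of $k$, that $\codim\mathcal{Y}_{\ell-1}-\codim\mathcal{Z}_{\ell-1}$ equals the same quantity $\delta(n)$, and you make explicit the standard argument that the covering piece of strictly smallest codimension is an actual irreducible component — details the paper's terse proof leaves implicit, and your computations there check out.
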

 \begin{proof} 
   It is clear by definition that, for every $s$, $\mathcal{Z}_s$ cannot be contained in $\mathcal{Z}_{s+1}$, since $x_{ij}^{(s)}=0$ in $\mathcal{Z}_{s+1}$ but not in $\mathcal{Z}_s$; now, a direct computation shows that the codimension of $\mathcal{Z}_s$ is a strictly decreasing function in $s$ if and only if $n^2-9n+12>0$, i.e for all $n\geq 8$; it is strictly increasing for $n=6,7$. 

%

\end{proof}
 
 \begin{proposition}\label{per2decomponiamo}
Let $n\geq 8$ and, for any integer $k\geq 2$, let $\ell=\lfloor k/2\rfloor$. Then, the decomposition of $\mathcal{P}^{n,k}_2$ into irreducible components is given by\,\,  $\mathcal{P}^{n,k}_2=\mathcal{Y}_{\ell-1}\cup \mathcal{Z}_0\cup \mathcal{Z}_1\cup\cdots\cup \mathcal{Z}_{\ell-1}$.
\end{proposition}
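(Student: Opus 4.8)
The plan is to verify that the union
$$\mathcal{P}^{n,k}_2=\mathcal{Z}_0\cup\mathcal{Z}_1\cup\cdots\cup\mathcal{Z}_{\ell-1}\cup\mathcal{Y}_{\ell-1}$$
is irredundant, and then to invoke the standard fact that a finite union of irreducible closed subvarieties in which no member is contained in another is exactly the decomposition into irreducible components. The union itself I would simply read off from the iterated decomposition preceding Proposition \ref{mainred}: one starts from $\mathcal{P}^{n,k}_2=\mathcal{Y}_0\cup\mathcal{Z}_0$ (Remark \ref{sembrablabla} and Lemma \ref{smanettoso}) and unrolls the relations $\mathcal{Y}_s=\mathcal{Y}_{s+1}\cup\mathcal{Z}_{s+1}$ furnished by Lemma \ref{zione} until one reaches the stratum $\mathcal{Y}_{\ell-1}$, which no longer decomposes. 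The irreducibility of each $\mathcal{Z}_s$ and of $\mathcal{Y}_{\ell-1}$ is already supplied by Proposition \ref{mainred}, so the entire content of the statement reduces to the absence of inclusions among these pieces.

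The main tool here is the codimension ordering of Lemma \ref{cista?} b): since $n\geq 8$, one has
$$\dim\mathcal{Y}_{\ell-1}>\dim\mathcal{Z}_{\ell-1}>\cdots>\dim\mathcal{Z}_1>\dim\mathcal{Z}_0,$$
so all the listed varieties have pairwise distinct dimensions. A proper inclusion $A\subsetneq B$ between two distinct irreducible members forces $\dim A<\dim B$; hence I only need to rule out inclusions of a lower-dimensional piece into a strictly higher-dimensional one, namely $\mathcal{Z}_s\subseteq\mathcal{Z}_{s'}$ with $s<s'$ and $\mathcal{Z}_s\subseteq\mathcal{Y}_{\ell-1}$. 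Every inclusion running in the opposite, dimension-increasing direction is automatically impossible.

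Both remaining inclusions I would exclude by the coordinate-vanishing argument already used inside the proof of Lemma \ref{cista?}. By construction $\mathcal{Z}_{s'}\subseteq\mathcal{Y}_{s'-1}$ and $\mathcal{Y}_{\ell-1}$ both lie in the linear subspace $\{x_{ij}^{(s)}=0\ \text{for all}\ i,j\}$ whenever $s\leq s'-1$, respectively $s\leq\ell-1$, whereas $\mathcal{Z}_s$ is the closure of a nonempty open set on which some $x_{ij}^{(s)}$ is nonzero. Thus the function $x_{ij}^{(s)}$ does not vanish identically on $\mathcal{Z}_s$ but does vanish on $\mathcal{Z}_{s'}$ and on $\mathcal{Y}_{\ell-1}$, so $\mathcal{Z}_s$ is contained in neither. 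This completes the irredundancy check and identifies the listed varieties as the irreducible components.

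I do not expect a genuine obstacle, since the substantive work---irreducibility and the explicit codimensions---has been carried out in Proposition \ref{mainred} and Lemma \ref{cista?}; what remains is the bookkeeping of inclusions above. The only point requiring care is to use the strictness of the inequalities in Lemma \ref{cista?} b), which holds precisely when $n\geq 8$: it is this strictness that guarantees distinct dimensions and thereby collapses the inclusion analysis to the single, easily handled direction. For $n=6,7$ the ordering reverses and the conclusion genuinely changes, which is why that range must be treated apart.
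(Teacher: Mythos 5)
Your proof is correct and takes essentially the same approach as the paper's: the paper likewise takes the union from the iterated stratification and the irreducibility from Proposition \ref{mainred}, and rules out inclusions by combining the coordinate-vanishing observation ($x_{ij}^{(s)}=0$ on $\mathcal{Z}_{s+1}$ and on $\mathcal{Y}_{\ell-1}$ but not identically on $\mathcal{Z}_s$) with the codimension ordering of Lemma \ref{cista?}, valid precisely for $n\geq 8$. Your only departure is cosmetic: you first use the strict dimension ordering to reduce to one direction of inclusion and check all pairs $\mathcal{Z}_s\subseteq\mathcal{Z}_{s'}$ with $s<s'$ explicitly, where the paper verifies adjacent pairs and disposes of the rest with ``in a similar fashion''.
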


  \begin{proof}
We have already observed that $\mathcal{Z}_s$ cannot be contained in $\mathcal{Z}_{s+1}$. Since $n\geq 8$, by Lemma \ref{cista?}, $\mathcal{Z}_s$ cannot contain $\mathcal{Z}_{s+1}$. In a similar fashion we may conclude that $\mathcal{Z}_s$ cannot be contained in $\mathcal{Y}_{\ell-1}$ and cannot contain $\mathcal{Y}_{\ell-1}$; now  the conclusion is straightforward.     
\end{proof}

\begin{corollary}\label{perpiudi2}
Let $n\geq 2r+4$\, and \, $r\geq 2$.\, Then  $\mathcal{P}^{n,k}_{r}$ has at least $\lfloor k/2 \rfloor+1$ irreducible components.   
\end{corollary}
\begin{proof} We argue by induction on $r$.
If $r=2$ then $n\ge 8$ and  the result holds true by Proposition \ref{per2decomponiamo}. Let now $r>2$; since  $n\ge 2r+4$, then  $n-2\ge 2(r-1)+4$; thus by induction $\mathcal{P}^{n-2,k}_{r-1}$ has  at least $\lfloor k/2 \rfloor+1$ irreducible components, and Theorem \ref{redusion} yields that these are in bijection with the irreducible components of   the  subvariety $\mathcal{Z}_0$ of $\mathcal{P}^{n,k}_{r}$; this is enough to imply what desired.
\end{proof}

\begin{remark}\label{docampo}
  The above result is analogous to that of \cite[Theorem 6.1]{KoSe}, which treats the general situation of non-maximal minors in the generic case. A more
  precise statement about the number of irreducible components of jet schemes of determinantal varieties is proved in \cite[Theorem A/Corollary 4.13]{Do}.
  At the moment, we cannot provide a counterpart to the latter in the case of pfaffians using these techniques.
\end{remark}

\begin{proof}[Proof of Theorem \ref{reducibell}]
We argue again by induction on $r$, the case $r=2$ being settled  by Proposition \ref{4reducible}. By induction, $\mathcal{P}^{n-2,k}_{r-1}$ is reducible and, again by Theorem \ref{redusion}, so is $\mathcal{P}^{n,k}_{r}$. \newline
It is easy to check by means of Lemma \ref{cista?},  that the irreducible components of $\mathcal{P}^{n,k}_{2}$ have different codimensions; hence, $\mathcal{P}^{n,k}_2$ is not pure, and thus not Cohen Macaulay. The same conclusion holds for   $\mathcal{P}^{n,k}_{r}$, by arguing on $\mathcal{Z}_0$ and applying Theorem \ref{redusion}.

\end{proof}

\begin{example} Conjecture \ref{quno} is false and Question \ref{qdue} has a negative answer for pfaffian jet ideal $I^{n,k}_r$ with $n\ge 2r+2$,  as the following example shows.
  Consider $I^{6,2}_2$, the jet pfaffian ideal generated by all of the 4 pfaffians of a generic $6\times 6$ skew symmetric matrix $X(t)$ of polynomials of degree 1 in $t$;\, $I^{6,2}_2$ is generated by 30 polynomials of degree $r=2$. With Macaulay2 one verifies  that $\codim I^{6,2}_2=12$, and that the ideal is radical, but not prime.  Moreover, the Hilbert series of $R^{6,2}_2$ is
{\footnotesize  $$H(R^{6,2}_2, z)=\frac{1+12z+48z^2+75z^3+45z^4+15z^5}{(1-z)^{18}},$$}
whereas
{\footnotesize $$H(R^{6,1}_2, z)=\frac{1+6z+6z^2+z^3}{(1-z)^9}.$$}
  Thus, $$\dim R^{6,2}_2=18=2\dim R^{6,1}_2,\,\,\text{ and }\,\,e(R^{6,2}_2)=196=14^2=e(R^{6,1}_2)^2$$
  but $h(R^{6,2}_2)\neq(h(R^{6,1}_2))^2$;\,\,therefore\,\, $H(R^{6,2}_2, z) \neq  H(R^{6,1}_2, z)^2.$
\end{example}

\begin{remark}\label{ghorpade}
  Observe that, in the above example, the non trivial component $\mathcal{Z}_0$ of $\mathcal{P}_2^{6,2}$ is defined by the ideal $I_2^{6,2} \,:\, (x_{56}^{(0)})^\infty$; its Hilbert series is
{\footnotesize  $$\frac{1+12z+48z^2+74z^3+48z^4+12z^5+z^6}{(1-z)^{18}}.$$}
  As in the analogous case for the principal component of the 2nd jet ideal of the ideal of $2\times 2$ minors in a generic matrix (cf. the main result of \cite{GhJoSe}, Theorem 18),  we have that its Hilbert series is the square of the original one. We also notice that its $h$-vector has only positive coefficients and it is symmetric.
\end{remark}


\end{document}